\documentclass[12pt,english]{amsart}
\usepackage[T1]{fontenc}
\usepackage[latin9]{inputenc}
\usepackage{verbatim}
\usepackage{amsthm}
\usepackage{amstext}
\usepackage{amssymb}

\input xy
\xyoption{all}

\makeatletter

\numberwithin{equation}{section}
\numberwithin{figure}{section}
\theoremstyle{plain}
\newtheorem{thm}{Theorem}
\theoremstyle{plain}
\newtheorem{prop}[thm]{Proposition}
\theoremstyle{plain}
\newtheorem{lem}[thm]{Lemma}
\theoremstyle{plain}
\newtheorem{cor}[thm]{Corollary}
\newtheorem{rem}[thm]{Remark}
\newtheorem{ex}[thm]{Example}

\usepackage{amsthm}
\usepackage[active]{srcltx}

\makeatletter

\numberwithin{equation}{section}
\numberwithin{figure}{section}

\setlength{\textheight}{23cm}
\setlength{\textwidth}{16cm}
\setlength{\topmargin}{-0.8cm}

\hoffset=-1.4cm

\def\quot{/\!\!/}
\def\tr{\mathsf{tr}}

\def\hom{\mathsf{Hom}}
\def\C{\mathbb{C}}

\makeatother

\usepackage{babel}

\title[Parabolic Character Varieties of Free Groups]{The Topology
of Parabolic Character Varieties of Free Groups}

\author[I. Biswas]{Indranil Biswas}

\address{School of Mathematics, Tata Institute of Fundamental
Research, Homi Bhabha Road, Bombay 400005, India}

\email{indranil@math.tifr.res.in}

\author[C. Florentino]{Carlos Florentino}

\address{Departamento Matem\'atica, Instituto Superior T\'ecnico, Av. Rovisco
Pais, 1049-001 Lisbon, Portugal}

\email{cfloren@math.ist.utl.pt}

\author[S. Lawton]{Sean Lawton}

\address{Department of Mathematics, The University of Texas-Pan American,
1201 West University Drive Edinburg, TX 78539, USA}

\email{lawtonsd@utpa.edu}

\author[M. Logares]{Marina Logares}

\address{Instituto de Ciencias Matem\'aticas (CSIC-UAM-UC3M-UCM),
Nicol\'as Cabrera 15, Campus Cantoblanco UAM, 28049 Madrid, Spain}

\email{marina.logares@icmat.es}

\subjclass[2000]{14L30, 20E05, 14P25, 14L17}

\keywords{Free group, representation space, conjugacy class,
strong deformation retraction, complex algebraic group}

\makeatother

\usepackage{babel}

\begin{document}
\begin{abstract}
Let $G$ be a complex affine algebraic reductive group, and let
$K\,\subset\, G$ be
a maximal compact subgroup. Fix $\mathbf{h}\,:=\,(h_{1}\,,\cdots\,,h_{m})\,\in\, K^{m}$.
For $n\, \geq\, 0$, let $\mathsf{X}_{\mathbf{h},n}^{G}$ (respectively, 
$\mathsf{X}_{\mathbf{h},n}^{K}$)
be the space of equivalence classes of representations of the free
group on $m+n$ generators in $G$ (respectively, $K$) such that
for each $1\leq i\leq m$, the image of the $i$-th free generator
is conjugate to $h_{i}$. These spaces are parabolic analogues
of character varieties of free groups. We prove that 
$\mathsf{X}_{\mathbf{h},n}^{K}$
is a strong deformation retraction of $\mathsf{X}_{\mathbf{h},n}^{G}$.
In particular, $\mathsf{X}_{\mathbf{h},n}^{G}$ and $\mathsf{X}_{\mathbf{h},n}^{K}$
are homotopy equivalent. We also
describe explicit examples relating $\mathsf{X}_{\mathbf{h},n}^{G}$
to relative character varieties.
\end{abstract}
\maketitle

\section{Introduction}

\thispagestyle{empty}Let $G$ be a complex affine algebraic reductive
group. Fix a maximal compact subgroup $K$ of $G$.

Denote by $\hom (\mathsf{F}_{n},G)$ the space of representations in $G$ of 
the free group $\mathsf{F}_{n}$ of rank $n$.
The group $G$ acts on $\hom(\mathsf{F}_{n},G)$
by conjugation; the action of any $g\,\in\, G$ sends any
$\rho\,\in\,\hom(\mathsf{F}_{n},G)$
to the homomorphism defined by $\gamma\,\longmapsto\,
g^{-1}\rho(\gamma)g$,
where $\gamma\,\in\,\mathsf{F}_{n}$. The GIT quotient space \[
\mathsf{X}_{n}^{G}\,:=\,\hom(\mathsf{F}_{n},G)\quot G
\]
 is an affine algebraic variety, known as the $G$-\textit{character
variety} of $\mathsf{F}_{n}$.

Similarly, we have the compact topological space
\[
\mathsf{X}_{n}^{K}\,:=\,\hom(\mathsf{F}_{n},K)/K
\]
that parametrizes the equivalence classes of homomorphisms from
$\mathsf{F}_{n}$ to $K$. 

Whenever we will mention topology of $\mathsf{X}_{n}^{G}$, we will mean
the Euclidean topology of the underlying complex space.

In \cite{FL1} it was shown that there is a strong deformation retraction
from $\mathsf{X}_{n}^{G}$ to $\mathsf{X}_{n}^{K}$. In particular,
these spaces have the same homotopy type. On the other hand, if $\pi$
is the fundamental group of a closed orientable surface, then it is known
that the
corresponding spaces $\hom(\pi,G)\quot G$ and $\hom(\pi,K)/K$ do
not have the same homotopy type; in fact, any connected component
of $\hom(\pi,G)\quot G$ differs cohomologically from the
corresponding connected component of $\hom(\pi,K)/K$
\cite{BF}.

Our aim here is to consider the parabolic analogues of $\mathsf{X}_{n}^{G}$
and $\mathsf{X}_{n}^{K}$.

Take any integer $m\, \geq\, 0$, and fix
$(h_{1}\,,\cdots\,,h_{m})\,\in\, K^m$. Let $C_{j}^{G}$
(respectively, $C_{j}^{K}$) be the conjugacy class of $h_{j}$ in
$G$ (respectively, $K$). For any $n\, \geq\, 0$, let $\mathsf{F}_{m+n}$ be the free group on $m+n$ generators; the $i$-th generator of $\mathsf{F}_{m+n}$
will be denoted by $e_{i}$. Define
\[
\mathsf{H}_{\mathbf{h},n}^{G}\,:=\,
\{\rho\,\in\,\hom(\mathsf{F}_{m+n}\,,G)\, \mid\,\rho(e_{j})\,\in\,
C_{j}^{G}\, ,~\,\forall~1\leq j\leq m\}\]
 and \[
\mathsf{H}_{\mathbf{h},n}^{K}\,:=\,\{\rho\,\in\,
\hom(\mathsf{F}_{m+n}\,,K)\,\mid\,\rho(e_{j})\,\in\,
C_{j}^{K}\, ,~\,\forall~1\leq j\leq m\}\,.\]
As before, the group $G$ (respectively, $K$) acts on 
$\mathsf{H}_{\mathbf{h},n}^{G}$
(respectively, $\mathsf{H}_{\mathbf{h},n}^{K}$) via the conjugation
action of $G$ (respectively, $K$) on itself. Define \[
\mathsf{X}_{\mathbf{h},n}^{G}\,:=\, 
\mathsf{H}_{\mathbf{h},n}^{G}\quot G~\,\text{~and~}\, 
~\mathsf{X}_{\mathbf{h},n}^{K}\,:=\, 
\mathsf{H}_{\mathbf{h},n}^{K}/K\,.\]
Note that if $m=0$ these spaces reduce to the previous ones, $\mathsf{X}_{n}^{G}$ and $\mathsf{X}_{n}^{K}$, respectively.

We prove the following (see Corollary \ref{cor-main}):

\begin{thm}\label{thm-i}
The quotient space $\mathsf{X}_{\mathbf{h},n}^{K}$ is a strong 
deformation
retraction of $\mathsf{X}_{\mathbf{h},n}^{G}$. In particular, $\mathsf{X}_{\mathbf{h},n}^{G}$
and $\mathsf{X}_{\mathbf{h},n}^{K}$ are homotopy equivalent.
\end{thm}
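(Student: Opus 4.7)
The plan is to adapt the strong deformation retraction of \cite{FL1}, which handles the case $m=0$, by extending it to respect the conjugacy-class constraints on the first $m$ generators. On the unconstrained coordinates $e_{m+1},\ldots,e_{m+n}$ I would apply directly the $K$-equivariant polar-type retraction $R_{t}\colon G\to G$ from \cite{FL1}, built from the Cartan decomposition $G=K\cdot\exp(\mathfrak{p})$ with $\mathfrak{p}=i\mathfrak{k}$, satisfying $R_{1}=\mathrm{id}_{G}$ and $R_{0}(G)\subset K$. The new ingredient is a matching $K$-equivariant strong deformation retraction of each conjugacy class $C_{j}^{G}$ onto $C_{j}^{K}$ to handle the parabolic coordinates.

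To construct this, I would exploit the fact that $h_{j}\in K$ is semisimple in $G$, so $Z_{G}(h_{j})$ is reductive; moreover the Cartan involution $\theta$ of $G$ fixes $h_{j}$, hence preserves $Z_{G}(h_{j})$ and restricts to its Cartan involution $Z_{G}(h_{j})=Z_{K}(h_{j})\cdot\exp(\mathfrak{p}\cap\mathfrak{z}_{j})$, where $\mathfrak{z}_{j}=\mathrm{Lie}(Z_{G}(h_{j}))$. Thus $\theta$ descends to an involution of $C_{j}^{G}=G/Z_{G}(h_{j})$ whose fixed set is precisely $C_{j}^{K}=K/Z_{K}(h_{j})$. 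Fixing a $\theta$-invariant Hermitian form on $\mathfrak{g}$ and the orthogonal decomposition $\mathfrak{p}=(\mathfrak{p}\cap\mathfrak{z}_{j})\oplus(\mathfrak{p}\cap\mathfrak{z}_{j})^{\perp}$, a normal-slice argument should give each coset in $G/Z_{G}(h_{j})$ a canonical representative of the form $k\exp(Y)$ with $Y\in(\mathfrak{p}\cap\mathfrak{z}_{j})^{\perp}$ (well-defined up to $Z_{K}(h_{j})$ acting on the left); rescaling $Y\mapsto tY$ then yields a $K$-equivariant strong deformation retraction $r_{t}^{(j)}\colon C_{j}^{G}\to C_{j}^{G}$ onto $C_{j}^{K}$.

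Combining these coordinate-wise retractions gives a $K$-equivariant strong deformation retraction $F_{t}$ of $\mathsf{H}_{\mathbf{h},n}^{G}$ onto $\mathsf{H}_{\mathbf{h},n}^{K}$, acting as $r_{t}^{(j)}$ on the $j$-th parabolic coordinate and as $R_{t}$ on each free coordinate. To pass to the GIT quotient $\mathsf{X}_{\mathbf{h},n}^{G}$ I would follow the Kempf-Ness/Richardson strategy of \cite{FL1}: since $\mathsf{H}_{\mathbf{h},n}^{G}$ is a closed $G$-invariant subvariety of $G^{m+n}$, a suitable $K$-invariant norm on an ambient representation should realize $\mathsf{H}_{\mathbf{h},n}^{K}$ as the minimal vector set, so that every closed $G$-orbit meets it in a single $K$-orbit and the inclusion induces a homeomorphism $\mathsf{H}_{\mathbf{h},n}^{K}/K\to\mathsf{X}_{\mathbf{h},n}^{G}$. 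Combined with the $K$-equivariance of $F_{t}$ and the fact that $F_{t}$ does not increase distance to the minimal set (a property to verify), one obtains a continuous deformation retraction on the quotient.

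The main obstacle I anticipate is the Kempf-Ness-type identification in the parabolic setting: namely, choosing the right $G$-equivariant embedding of $\mathsf{H}_{\mathbf{h},n}^{G}$ so that its minimal vector set is exactly $\mathsf{H}_{\mathbf{h},n}^{K}$, and showing that $F_{t}$ interacts correctly with the collapsing of non-closed orbits so that it descends continuously on $\mathsf{X}_{\mathbf{h},n}^{G}$. A secondary technical point is the well-definedness of each $r_{t}^{(j)}$ on $C_{j}^{G}$ (as opposed to on the space of conjugating elements in $G$), which should ultimately follow from the $\theta$-invariance of $Z_{G}(h_{j})$ ensuring that the slice $(\mathfrak{p}\cap\mathfrak{z}_{j})^{\perp}$ is compatible with the fibration $G\to C_{j}^{G}$.
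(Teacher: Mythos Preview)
Your proposal diverges from the paper's argument, and the obstacle you yourself flag is exactly where it breaks down. Having a $K$--equivariant strong deformation retraction $F_t$ of $\mathsf{H}_{\mathbf{h},n}^{G}$ onto $\mathsf{H}_{\mathbf{h},n}^{K}$ gives only a retraction of the $K$--orbit space $\mathsf{H}_{\mathbf{h},n}^{G}/K$, not of the GIT quotient $\mathsf{H}_{\mathbf{h},n}^{G}\quot G$, which further collapses non-closed $G$--orbits. You try to bridge this by hoping that, for a well-chosen embedding, the Kempf--Ness set of $\mathsf{H}_{\mathbf{h},n}^{G}$ equals $\mathsf{H}_{\mathbf{h},n}^{K}$. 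There is no reason for this equality to hold in general; one only gets an inclusion, and the gap between ``contains $K^{m+n}$'' and ``equals $K^{m+n}$'' is precisely the difficulty. Moreover, even if you had the identification, the role of $F_t$ becomes unclear: the Neeman--Schwarz retraction onto the Kempf--Ness set is what descends, not your hand-built $F_t$, and verifying that $F_t$ is compatible with the polystable-quotient map is not a routine check.

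The paper sidesteps both issues by two moves you do not make. First, it rewrites $\mathsf{X}_{\mathbf{h},n}^{G}$ not as a $G$--quotient of the subvariety $\mathsf{H}_{\mathbf{h},n}^{G}\subset G^{m+n}$, but as a $\big(\prod_i G_i\big)\times G$--quotient of the \emph{full} product $G^{m+n}$, using $C_j^G\cong G/G_j$ and the isomorphism $(\prod_i G/G_i\times G^n)\quot G\cong G^{m+n}\quot\big((\prod_i G_i)\times G\big)$. This avoids working inside a subvariety and reduces to a Kempf--Ness setup on all of $G^{m+n}$. Second, instead of proving that the Kempf--Ness set equals $K^{m+n}$, the paper only shows the inclusion $K^{m+n}\subset \mathrm{KN}_{G^{m+n}}$ and then runs a Whitehead-theorem argument on the diagram
\[
\begin{array}{ccc}
K^{m+n} & \hookrightarrow & G^{m+n}\\
\downarrow & & \Vert\\
\mathrm{KN}_{G^{m+n}} & \hookrightarrow & G^{m+n}
\end{array}
\]
where both horizontal arrows are equivariant homotopy equivalences (the top by the $K\times K$--equivariant polar retraction, the bottom by Neeman--Schwarz). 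Passing to $(\prod_i K_i)\times K$--quotients, Whitehead's theorem forces the left vertical map to be a strong deformation retraction of CW complexes, and the Kempf--Ness homeomorphism $\mathrm{KN}_{G^{m+n}}/\big((\prod_i K_i)\times K\big)\cong G^{m+n}\quot\big((\prod_i G_i)\times G\big)$ finishes. Your normal-slice retraction of $C_j^G$ onto $C_j^K$ is morally the content of the paper's Corollary on orbits, but it is not used in the passage to quotients; the decisive device is the Whitehead diagram, which you are missing.
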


Take closed subgroups $K_1\, ,\cdots\, ,K_m$ of $K$. Let $G_i$
be the Zariski closure of $K_i$ in $G$. Theorem \ref{thm-i} is
derived from the following more general result:

\begin{thm}
Let $\left(\left(\prod_{i=1}^mK/K_i\right)\times K^n\right)/K$
$($respectively, $\left(\left(\prod_{i=1}^mG/ G_i\right)\times 
G^n\right)\quot G)$ be the 
quotient for the diagonal left-translation action of $K$ $($respectively, $G)$ on 
the $m$ homogeneous spaces $K/K_i$ $($respectively, $G/G_i)$, and conjugation on the 
other $n$ factors of $K$ $($respectively, $G)$. Then 
$\left(\left(\prod_{i=1}^mK/K_i\right)\times K^n\right)/K$ is a strong
deformation retraction of $\left(\left(\prod_{i=1}^mG/ G_i\right)\times 
G^n\right)\quot G$.
\end{thm}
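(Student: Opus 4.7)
The plan is to extend the Kempf--Ness argument of \cite{FL1}, which treats the case $m=0$, to the setting with $m$ homogeneous space factors. First I would establish the basic structure: since each $K_i$ is compact and Zariski-dense in $G_i$, one has $G_i = K_i^{\C}$, so $G_i$ is complex reductive with $K_i$ as a maximal compact subgroup, and $K\cap G_i = K_i$. By Matsushima's theorem, each $G/G_i$ is an affine $G$-variety, so $X := \left(\prod_{i=1}^{m} G/G_i\right) \times G^n$ is an affine $G$-variety with the stated diagonal $G$-action, and $Z := \left(\prod_{i=1}^{m} K/K_i\right) \times K^n$ embeds as a compact $K$-invariant closed subspace of $X$.

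Next I would apply the Kempf--Ness theorem. Choose a $G$-equivariant closed embedding $X \hookrightarrow V$ into a finite-dimensional $G$-representation equipped with a $K$-invariant Hermitian inner product; this yields a K\"ahler structure on $X$ and an associated moment map $\mu : X \to \mathfrak{k}^*$. Kempf--Ness gives a natural homeomorphism $\mu^{-1}(0)/K \cong X\quot G$ together with a $K$-equivariant strong deformation retract of $X$ onto $\mu^{-1}(0)$. In parallel I would build an explicit $K$-equivariant strong deformation retract $F_t : X \to X$ with $F_1(X) = Z$, assembled factor by factor: for each $G/G_i$, the Mostow decomposition $G/G_i \cong K \times_{K_i} \mathfrak{p}_i$ allows one to scale $\mathfrak{p}_i$ to zero, producing a retract onto $K/K_i$; for each $G$ factor under $K$-conjugation, polar decomposition $g = k\exp(i\xi)$ gives the retract $k\exp(i(1-t)\xi)$ onto $K$, exactly as in \cite{FL1}.

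The remaining step, which I expect to be the main obstacle, is to show that the $K$-equivariant retract $F_t$ descends to a strong deformation retract of $X\quot G$ onto $Z/K$. This requires two ingredients. The first is that every closed $G$-orbit in $X$ meets $Z$ in a single $K$-orbit, so that the natural map $Z/K \to X\quot G$ is a closed embedding; this should be provable factor by factor, with the homogeneous space factors being easy since $G/G_i$ is a single $G$-orbit and $K/K_i$ is a single $K$-orbit. The second, more subtle, ingredient is the compatibility of $F_t$ with the Kempf--Ness retract onto $\mu^{-1}(0)$, so that the induced deformation of $X\quot G \cong \mu^{-1}(0)/K$ is well defined and terminates on $Z/K$. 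Both points were handled in \cite{FL1} for $m=0$ via an explicit polar-decomposition analysis of orbit closures in $G^n$; adapting this to the homogeneous space factors, where the analogue of polar decomposition is the Mostow decomposition, is the technical heart of the argument.
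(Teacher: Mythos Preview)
Your outline is in the right spirit and correctly isolates the hard step, but it leaves that step unresolved, and the paper's proof is organized precisely to avoid it. Two differences matter.

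First, the paper does not work directly with the affine $G$-variety $X=\prod_i G/G_i\times G^n$. Instead it undoes the homogeneous quotients and passes to $G^m\times G^n$ with the action of the larger group $\bigl(\prod_i G_i\bigr)\times G$ (the $G_i$ acting on the right on the first $m$ factors, $G$ acting diagonally on the left and by conjugation on the last $n$). This gives isomorphisms $\mathcal{X}_{m,n}\quot G\cong (G^m\times G^n)\quot\bigl(\prod_i G_i\times G\bigr)$ and similarly for the compact side. The advantage is that the ambient space is now just a product of copies of $G$, so the polar decomposition handles everything uniformly and one checks directly (Lemma~\ref{cwinc}) that $K^m\times K^n\subset \mathrm{KN}_{G^m\times G^n}$. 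No Mostow decomposition of $G/G_i$ is needed.

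Second, and more importantly, the paper never tries to make your explicit retraction $F_t$ compatible with the Kempf--Ness retraction. Instead it uses a Whitehead-type argument (Proposition~\ref{pro:general-retraction} and Remark~\ref{eqsdrrem}): one has a commutative triangle of $\bigl(\prod_i K_i\bigr)\times K$-equivariant inclusions
\[
K^m\times K^n \ \hookrightarrow\ \mathrm{KN}_{G^m\times G^n}\ \hookrightarrow\ G^m\times G^n,
\]
where the composite and the second map are each equivariant homotopy equivalences (by Lemma~\ref{biaction} and by Neeman--Schwarz, respectively). Passing to quotients by $\bigl(\prod_i K_i\bigr)\times K$ and applying Whitehead's theorem to the resulting triangle of CW complexes yields that $\mathcal{Y}_{m,n}/K\hookrightarrow \mathrm{KN}_{G^m\times G^n}/\bigl(\prod_i K_i\times K\bigr)\cong \mathcal{X}_{m,n}\quot G$ is a strong deformation retract. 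The point is that one only needs both maps to be homotopy equivalences, not that the two retractions agree or commute.

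Your plan, by contrast, hinges on showing that a $K$-equivariant retraction of $X$ onto $Z$ descends to the GIT quotient. Since $F_t$ is only $K$-equivariant, not $G$-equivariant, there is no direct descent to $X\quot G$; you would have to route through $\mu^{-1}(0)/K$ and verify that $F_t$ preserves (or is compatible with) $\mu^{-1}(0)$, which is exactly the ``compatibility'' you flag as the main obstacle. That compatibility is not proved in \cite{FL1} either in the form you suggest; already in the free-group case the argument goes through the same Whitehead trick rather than an explicit orbit-closure analysis. So the missing ingredient in your proposal is not a minor adaptation but the very step the paper's architecture is designed to bypass: replace ``make the two retractions compatible'' with ``show $Z$ sits inside the Kempf--Ness set as a subcomplex, then invoke Whitehead on the triangle of quotients.''
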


Let $\Sigma$ be the complement of $p$ points of a compact surface of 
genus $g$, with $p\, \geq\, 1$. The fundamental group of $\Sigma$ 
is isomorphic
to the free group $\mathsf{F}_{m+n}$ of rank $m+n=2g+p-1$. By choosing a free generating
set $\gamma=\left\{ \gamma_{1}\,,\cdots\,,\gamma_{m+n}\right\} $ of
$\pi_{1}(\Sigma,s_{0})$, where $s_{0}$ is a base point, the space of
representations of $\pi_{1}(\Sigma,s_{0})$ into $G$ gets identified with
$\hom(\mathsf{F}_{m+n},G)$ which is isomorphic to
$G^{m+n}$, via the evaluation map
$$
\mathsf{ev}_{\gamma}\,:\,\hom(\pi_{1}(\Sigma,s_{0}),G)\cong \hom(\mathsf{F}_{m+n},G)\,\longrightarrow\,
 G^{n+m}\,,~\,\rho\,\longmapsto\,(\rho(\gamma_{1})\,
,\cdots\,,\rho(\gamma_{m+n}))\, .
$$
Using this identification, we have
\[
\mathsf{X}_{\mathbf{h},n}^{G}\,\subset\, \hom(\pi_{1}(\Sigma,s_{0}),G)\quot G 
\, .
\]

In the case of $m=0$, by sending any flat connection on $\Sigma$ to its
monodromy representation, the space $\mathsf{X}_{n}^{K}$
(respectively, $\mathsf{X}_{n}^{G}$) gets
identified with the
moduli space of flat principal $K$--bundles (respectively,
completely reducible flat $G$--bundles) over $\Sigma$.
For $m>0$, we call $\mathsf{X}_{\mathbf{h},n}^{G}$ and
$\mathsf{X}_{\mathbf{h},n}^{K}$ parabolic character varieties,
because of their analogy with parabolic vector bundles.

As studied in \cite{BeGo} and \cite{La2},
there is a natural ``boundary'' map associating the conjugacy classes around the
punctures to a representation in $\hom(\pi_{1}(\Sigma,s_{0}),G)$. 
{\em Relative} character varieties are then defined as inverse images under the boundary map.
In the last two sections, we consider various examples of 
parabolic character varieties and show how they relate to relative character varieties.

\section{Deformation Retractions of Homogeneous Spaces}

The base field is the complex numbers. Let $G$ be an affine algebraic
reductive group. Fix a maximal compact subgroup $K\,\subset\, G$.
Let $X$
be an affine $G$--variety. The action of $G$ on $X$ will be denoted
by {}``$\cdot$'', meaning the action of $g\,\in\, G$ sends any $x\,\in\, X$
to $g\cdot x\,\in\, X$. The algebra of $G$-invariant regular functions
on $X$ will be denoted by $\mathbb{C}[X]^{G}$. Let \[
X\quot G:=\mbox{Spec}(\mathbb{C}[X]^{G})\]
 be the quotient, which is an affine variety \cite{Dol}. We call
$X\quot G$ the \textit{affine quotient} of $X$ by $G$. It satisfies the
usual properties of good categorical quotients (see \cite{Dol}).

In this situation, one can show that there is a pair $(V\,,\iota)$,
where $V$ is a $G$--module of finite (complex) dimension, and \[
\iota\,:\, X\,\hookrightarrow\, V\]
 is a $G$--equivariant embedding. Fix a $K$-invariant Hermitian
inner product $\langle-\, ,-\rangle$ on $V$. For any $x\,\in\, X$,
let \[
F_{x}\,:\, G\,\longrightarrow\,\mathbb{R}\]
 be the smooth function defined by $g\,\longmapsto\,||\iota(g\cdot x)||^{2}$.
With respect to these choices of $V$, $\iota$ and $\langle -\, , -
\rangle$, the Kempf--Ness set of $X$ is defined to be
\begin{equation}\label{kns}
{\rm KN}_{X}\,:=\,\{x\,\in\, X\,\mid\,\,\left(dF_{x}\right)_{e}\,
=\,0\}\, ,
\end{equation}
where $e\,\in\, G$ is the identity element. This implies that 
${\rm KN}_X$ is a real algebraic subspace, and hence it is a CW-complex (\cite{BCR, Hi}).

For any $x\,\in\, X$, let $K\cdot x$ denote the $K$-orbit passing
through $x$. We recall a theorem of Kempf and Ness.

\begin{thm}[{\cite[Theorem 0.1, Theorem 0.2]{KN}}]
\label{thm:KN} All the critical points of $F_{x}$
are minima. Moreover, $F_{x}$ has a critical point if and only if
$G\cdot x\, \subset\, X$ is closed. Also, if $x\,\in\,{\rm 
KN}_{X}$, then $K\cdot x\,\subset\,{\rm KN}_{X}$.
\end{thm}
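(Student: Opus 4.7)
My plan is to exploit the Cartan decomposition $G=K\cdot\exp(\mathfrak{p})$, where $\mathfrak{p}:=i\,\mathrm{Lie}(K)$, together with the fact that the representation of $G$ on $V$ restricts to $K$ as a unitary representation with respect to $\langle-,-\rangle$: every $\eta\in\mathrm{Lie}(K)$ acts on $V$ by a skew-Hermitian operator, and every $\xi\in\mathfrak{p}$ therefore acts by a Hermitian operator, diagonalizable with real eigenvalues.

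I would first dispatch the $K$-invariance assertion, i.e., that $K\cdot x\subset\mathrm{KN}_X$ whenever $x\in\mathrm{KN}_X$. For $\eta\in\mathfrak{g}$ and $k\in K$, the $K$-invariance of $\langle-,-\rangle$ gives
\[
(dF_{k\cdot x})_e(\eta)=\frac{d}{dt}\bigg|_{t=0}\|\exp(t\eta)\cdot(k\cdot x)\|^2=\frac{d}{dt}\bigg|_{t=0}\|(k^{-1}\exp(t\eta)k)\cdot x\|^2=(dF_x)_e(\mathrm{Ad}(k^{-1})\eta),
\]
and since $\mathrm{Ad}(k^{-1})\colon\mathfrak{g}\to\mathfrak{g}$ is a linear automorphism, vanishing on all $\eta$ of one side is equivalent to vanishing on all $\eta$ of the other.

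The statement that every critical point of $F_x$ is a global minimum is the content of the following convexity lemma, which is the heart of the argument. For $\xi\in\mathfrak{p}$, diagonalize the Hermitian action of $\xi$ on $V$ as $V=\bigoplus_{\mu\in\mathbb{R}}V_\mu$ and decompose $x=\sum_\mu x_\mu$; then
\[
F_x(\exp(s\xi))=\sum_\mu e^{2s\mu}\,\|x_\mu\|^2,
\]
which is convex in $s\in\mathbb{R}$ and strictly convex unless $\xi\cdot x=0$. Because the Hermitian inner product is $K$-invariant, $F_x(kg)=F_x(g)$ for $k\in K$, so $F_x$ descends to a smooth function on $K\backslash G\cong\mathfrak{p}$; the lemma shows it is convex along every line through the origin, hence globally convex. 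A critical point of $F_x$ on $G$ thus corresponds to a critical point of this convex descent, which must be a global minimum.

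The equivalence between existence of a critical point of $F_x$ and closedness of $G\cdot x$ is where the real work lies. For the easier implication, if $G\cdot x\subset X$ is closed then, via the closed equivariant embedding $\iota$, it is closed in the finite-dimensional vector space $V$; the continuous function $\|\cdot\|^2$ on $G\cdot x$ is then proper and attains its infimum at some $y=g_0\cdot x$, making $g_0$ a critical point of $F_x$. The main obstacle is the converse: assuming $x\in\mathrm{KN}_X$, one must deduce that $G\cdot x$ is closed. The plan is to argue by contradiction, invoking the Hilbert--Mumford criterion to produce a $1$-parameter subgroup $\lambda\colon\mathbb{C}^\ast\to G$ for which $y:=\lim_{t\to 0}\lambda(t)\cdot x$ exists in $V$ with $y\notin G\cdot x$; writing $\lambda(e^s)=\exp(s\zeta)$, the fact that every semisimple element of $\mathfrak{g}$ with real eigenvalues is $G$-conjugate into $\mathfrak{p}$ allows one, after an appropriate change of variables, to reduce to the case $\zeta\in\mathfrak{p}$, identify $y$ with the projection of $x$ onto the $0$-eigenspace of $\zeta$, and hence obtain $\|y\|^2\leq\|x\|^2$. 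Combined with $\|x\|^2\leq\|\lambda(t)\cdot x\|^2$ from the minimum property already established, equality forces $x$ to lie entirely in the $0$-eigenspace of $\zeta$, so $\zeta\cdot x=0$, $\lambda(t)\cdot x=x$ for all $t$, and finally $y=x\in G\cdot x$, the desired contradiction. The delicate technical point is tracking the interaction between the $G$-conjugation that brings $\zeta$ into $\mathfrak{p}$ and the critical-point hypothesis on $x$ itself; a uniqueness statement for the $K$-orbit of minima, itself a consequence of strict convexity along the $\mathfrak{p}$-directions, is what makes this reduction rigorous.
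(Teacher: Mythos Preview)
The paper does not supply its own proof of this statement: Theorem~\ref{thm:KN} is quoted verbatim from Kempf--Ness \cite[Theorem~0.1, Theorem~0.2]{KN} and used as a black box throughout. There is therefore no argument in the paper to compare your attempt against.

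That said, your sketch tracks the classical Kempf--Ness argument quite faithfully. The $K$-invariance of ${\rm KN}_X$ via $\mathrm{Ad}(k^{-1})$ is exactly right, and the convexity of $s\mapsto\|\exp(s\xi)\cdot x\|^2$ for $\xi\in\mathfrak{p}$, obtained by diagonalising the Hermitian operator $\xi$, is precisely the mechanism Kempf and Ness use to force critical points to be global minima. One minor imprecision: convexity along every line through the origin in $\mathfrak{p}$ does not make the descended function on $K\backslash G\cong\mathfrak{p}$ globally convex in the affine sense, since the exponential identification does not send general affine lines to geodesics. But you do not actually need global convexity: a critical point at $e$ together with convexity along each ray already gives $F_x(e)\le F_x(\exp\xi)$ for every $\xi\in\mathfrak{p}$, and left $K$-invariance then yields $F_x(e)\le F_x(g)$ for all $g\in G$.

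For the harder implication (critical point $\Rightarrow$ closed orbit) you correctly isolate the genuine obstacle: the Hilbert--Mumford one-parameter subgroup $\lambda$ need not satisfy $\lambda(\mathbb{S}^1)\subset K$, so its weight spaces in $V$ need not be orthogonal, and the $G$-conjugation required to remedy this moves $x$ off the Kempf--Ness set. Your proposed resolution, invoking uniqueness of the $K$-orbit of minima (itself a consequence of strict convexity along $\mathfrak{p}$-directions), is indeed how Kempf and Ness close the argument, though making it fully rigorous requires a bit more than you have written: one shows that any point of $\overline{G\cdot x}$ realising the minimal norm $\|x\|$ must already lie in $K\cdot x$, then observes that the unique closed orbit in $\overline{G\cdot x}$ contains such a minimal-norm point, forcing $G\cdot x$ itself to be that closed orbit.
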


A deformation retraction from a topological space $M$ to a subspace
$N\,\subset\, M$ is a homotopy $$\phi\, :\, [0,1]\times
M\,\longrightarrow\, M$$ between the identity map of $M$, and a 
retraction $r_N\,:\,M\,\longrightarrow\, N$.
It is called a strong deformation retraction whenever it fixes the points in 
$N$, that is, 
$\phi(t, x)\,=\, x$ for all $x\in N$ and $t\in [0,1]$ (see \cite{Spanier}).
To abbreviate, we often express the above situation by saying that
$N\,\hookrightarrow\, M$, i.e., the inclusion itself,
is (strong) a deformation retraction.

\begin{rem}\label{quotientsdr}
If a group $J$ acts on $M$ amd on $N$ such that there is a $J$--equivariant
homotopy equivalence $\alpha: N\to M$ (i.e., a $J$-equivariant map with a homotopy inverse and
whose corresponding homotopies are $J$-equivariant), then the induced
map $\widetilde{\alpha}:\, N/J \longrightarrow\, M/J$ is again a homotopy
equivalence. Indeed, a $J$--equivariant homotopy
$$
[0\, ,1]\times M\, \longrightarrow\, M
$$
descends to a homotopy
$$
[0\, ,1]\times (M/J)\, \longrightarrow\, (M/J)
$$
of quotient spaces.
\end{rem}

We now recall a theorem of Neeman and Schwarz.

\begin{thm}[{\cite[Theorem 2.1]{Ne}, \cite[Corollary 
4.7,Corollary 5.3]{Sch1}}]
\label{thm:Neeman-Schwarz} The composition \begin{equation}
{\rm KN}_{X}\,\hookrightarrow\, X\,\longrightarrow\, X\quot G\label{eq:Nee-Sch}\end{equation}
is proper and induces a homeomorphism ${\rm KN}_{X}/K\,\stackrel{\sim}{\longrightarrow}\, X\quot G$.
Moreover, there is a $K$--equivariant deformation retraction from
$X$ to ${\rm KN}_{X}$. Therefore, there is a deformation
retraction $$f\,:\,{\rm KN}_{X}/K\,\longrightarrow\, X/K\, .$$
\end{thm}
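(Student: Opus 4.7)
The plan is to handle the theorem in three steps: the bijection $\mathrm{KN}_{X}/K \to X\quot G$, its upgrade to a homeomorphism, and the construction of a $K$-equivariant deformation retraction from $X$ to $\mathrm{KN}_{X}$; the concluding statement about quotients then follows formally from Remark \ref{quotientsdr}, since a $K$-equivariant homotopy $[0,1]\times X \to X$ that fixes $\mathrm{KN}_{X}$ pointwise descends to a strong deformation retraction of $X/K$ onto $\mathrm{KN}_{X}/K$.

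For the bijection, I would first invoke Theorem \ref{thm:KN}: $\mathrm{KN}_{X}$ meets a $G$-orbit if and only if that orbit is closed. Since standard GIT (see \cite{Dol}) says each fiber of $X \to X\quot G$ contains a unique closed $G$-orbit, surjectivity of $\mathrm{KN}_{X}/K \to X\quot G$ follows. For injectivity it suffices to show that the critical points of $F_{x}$ on a closed orbit form a single $K$-orbit; this comes from strict convexity of $t \mapsto \log F_{x}(\exp(t\xi))$ for $\xi \in i\cdot\mathrm{Lie}(K)$, together with the polar decomposition $G = K \cdot \exp(i\cdot\mathrm{Lie}(K))$. Continuity of the composition $\mathrm{KN}_{X}\hookrightarrow X \to X\quot G$ is automatic. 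For properness I would use that $\mathrm{KN}_{X}$ is closed in $X$ (being the real-algebraic zero-set of a gradient), that $K$-orbits are compact, and that control of the image in $X\quot G$ bounds $\|\iota(x)\|$ on the $\mathrm{KN}_{X}$-representative (since the Kempf--Ness minimum realizes the infimum of $F_{x}$ on the orbit, which is determined by the image point); hence preimages of compact sets are compact modulo $K$. A continuous proper bijection is a homeomorphism.

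The substantive step, and the main obstacle, is the $K$-equivariant deformation retraction from $X$ to $\mathrm{KN}_{X}$. The natural construction is the negative-gradient flow of $F(x) := \|\iota(x)\|^{2}$ on $X$, with respect to the $K$-invariant Riemannian metric obtained by restricting the real part of the Hermitian inner product on $V$; this flow is automatically $K$-equivariant, and it fixes $\mathrm{KN}_{X}$ pointwise because the tangent component of $dF$ along the $G$-orbit vanishes precisely on $\mathrm{KN}_{X}$ (this is the defining equation \eqref{kns}). On a closed $G$-orbit, strict convexity of $F_{x}$ along $\exp(i\mathfrak{k})$-directions forces convergence of each trajectory to the unique critical $K$-orbit in that orbit. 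The difficulty is the non-closed orbits: there $F_{x}$ is bounded below but attains no minimum on the orbit itself, so a gradient trajectory must limit onto the unique closed orbit sitting in the orbit closure. Obtaining convergence of these trajectories with continuous dependence on the initial data, so that the orbit-by-orbit retractions assemble into a genuine continuous homotopy $[0,1]\times X \to X$, is the analytic core; I would resolve it following \cite{Ne} and \cite{Sch1} by invoking the \L{}ojasiewicz inequality for the real-algebraic function $F$ on the real-algebraic variety $X$ (yielding finite-length trajectories with well-defined limit points) and then using a slice theorem to patch these trajectories compatibly across the orbit-type stratification of $X$.
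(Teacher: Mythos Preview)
The paper does not supply its own proof of this statement: Theorem~\ref{thm:Neeman-Schwarz} is quoted verbatim from the literature, with the attribution \cite[Theorem 2.1]{Ne} and \cite[Corollary 4.7, Corollary 5.3]{Sch1} given in the theorem heading, and no proof environment follows. The paper simply invokes it as a black box in the proofs of Theorem~\ref{thm:SDR-Hom-sp} and Theorem~\ref{thm:product-sdr}. So there is nothing in the paper to compare your proposal against.

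That said, your sketch is a faithful outline of how the result is actually established in \cite{Ne} and \cite{Sch1}: the bijection via Theorem~\ref{thm:KN} plus uniqueness of closed orbits in GIT fibers, properness from the norm bound on Kempf--Ness representatives, and the retraction via a gradient-type flow with convergence controlled by the \L{}ojasiewicz inequality. One small correction: the flow used is not the ambient gradient of $\|\iota(x)\|^{2}$ on $X$ (which would not preserve orbit closures), but rather the flow along the $G$-orbit directions, i.e.\ the vector field on $X$ whose value at $x$ is the tangent to the curve $t\mapsto \exp(-t\xi_{x})\cdot x$ for the unique $\xi_{x}\in i\,\mathrm{Lie}(K)$ realizing the orbit-gradient of $F_{x}$ at $e$. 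This is what makes the flow $K$-equivariant and tangent to orbit closures, and it is precisely for this flow that Neeman and Schwarz establish the required continuity across the orbit-type stratification.
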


The above theorem will be used in getting a deformation retraction between
homogeneous spaces. We start with the following proposition which
is basically an adaptation of some results in \cite{FL2}
to our situation.

\begin{prop}
\label{pro:general-retraction} Suppose that we have the following
commutative
diagram of maps of CW complexes:
\[
\begin{array}{ccc}
Y & \stackrel{\alpha}{\rightarrow} & X\\
~\Big\downarrow f & & \Vert\\
Z & \stackrel{\beta}{\rightarrow} & X\end{array}\]
where $f$ is an inclusion of a subcomplex, and
the maps $\alpha$ and $\beta$ are homotopy equivalences.
Then $f$ is a strong deformation retraction.

\end{prop}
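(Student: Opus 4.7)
The plan is to combine the two-out-of-three property for homotopy equivalences with the classical fact that a cofibration that is also a homotopy equivalence is the inclusion of a strong deformation retract.

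First I would observe that the commutativity $\beta \circ f = \alpha$, together with the fact that both $\alpha$ and $\beta$ are homotopy equivalences, forces $f$ itself to be a homotopy equivalence. Indeed, pick a homotopy inverse $\beta'$ of $\beta$; then $\beta' \circ \alpha$ is homotopic to $\beta' \circ \beta \circ f \simeq f$, so $f$ is homotopic to a composition of homotopy equivalences, hence a homotopy equivalence.

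Next I would exploit that $f: Y \hookrightarrow Z$ is the inclusion of a subcomplex of a CW complex. Such inclusions are closed cofibrations, i.e., the pair $(Z,Y)$ satisfies the homotopy extension property. The standard theorem (see Hatcher, Cor.\ 0.20, or Spanier's textbook) states that if $(Z,Y)$ is a CW pair and the inclusion $Y \hookrightarrow Z$ is a homotopy equivalence, then $Y$ is in fact a strong deformation retract of $Z$. The rough argument one would sketch: choose a homotopy inverse $g: Z \to Y$ to $f$; using the homotopy extension property one first modifies $g$ (keeping $g \circ f \simeq \mathrm{id}_Y$) so that $g \circ f = \mathrm{id}_Y$ on the nose, and then one promotes the homotopy $f \circ g \simeq \mathrm{id}_Z$ to one that is stationary on $Y$, again via the HEP. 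This produces the required strong deformation retraction of $Z$ onto $Y$.

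Essentially, the only genuine work is identifying the two ingredients correctly; after that, the conclusion is mechanical. The main potential obstacle is being careful that the CW-structure hypothesis really is used (a priori, a homotopy equivalence between an inclusion of spaces need not retract), and that it is legitimate to invoke the CW-pair version of the cofibration-plus-equivalence theorem in the generality needed later (where $Y$ and $Z$ will arise as Kempf--Ness sets and affine quotients, which are indeed CW complexes by the semi-algebraic triangulation results cited around \eqref{kns}). Once this is noted, the proposition follows directly.
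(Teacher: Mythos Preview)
Your argument is correct and follows essentially the same two-step pattern as the paper: first show that $f$ is a homotopy equivalence, then use the CW-pair structure to upgrade this to a strong deformation retraction. The only difference is in the first step: the paper passes through homotopy groups, observing that $\alpha_n$ and $\beta_n$ are isomorphisms on all $\pi_n$ and hence so is $f_n$, and then invokes Whitehead's theorem (Hatcher, Theorem~4.5) to conclude. You instead use the two-out-of-three property for homotopy equivalences directly, which is slightly more elementary since $\alpha$ and $\beta$ are already assumed to be homotopy equivalences rather than merely weak ones. Your version also makes the second step (cofibration plus homotopy equivalence implies strong deformation retract, via the homotopy extension property for CW pairs) more explicit than the paper, which folds it into the appeal to Whitehead; this is a useful clarification, since Whitehead's theorem alone only yields a homotopy equivalence.
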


\begin{proof}
For any nonnegative integer $n$, let
\[
\alpha_{n}\,:\,\pi_{n}(Y)\,\longrightarrow\,
\pi_{n}(X)~\,\text{~and~}\,~\beta_{n}\,:\,\pi_{n}(Z)\,
\longrightarrow\,\pi_{n}(X)\]
be the homomorphisms induced by $\alpha$ and $\beta$ respectively.
Since $\alpha$ and $\beta$ are homotopy equivalences, we know
that $\alpha_{n}$ and $\beta_{n}$ are isomorphisms for all $n\,\geq\,0$.
Since $$\alpha\,=\,\beta\circ f\, ,$$ it follows that $f$ also induces isomorphisms on all homotopy groups. Then, from
Whitehead's Theorem (see \cite[Theorem 4.5, page 346]{Hat}) we conclude that
$Y$ is indeed a strong deformation retraction of $Z$. 
\end{proof}

\begin{rem}
\label{eqsdrrem}
Using the same notation as in Proposition \ref{pro:general-retraction}, 
suppose that a group $J$ acts on $X$, $Y$ and $Z$, 
in such a way that $f$ is a $J$-equivariant inclusion of a subcomplex, 
and $\alpha$ and $\beta$ are $J$--equivariant homotopy equivalences.
Then all maps in the diagram of Proposition
 \ref{pro:general-retraction} are $J$--equivariant, and by Remark \ref{quotientsdr}
we obtain a commutative diagram of maps of quotient spaces
\[
\begin{array}{ccc}
Y/J & \stackrel{\widetilde{\alpha}}{\longrightarrow} & X/J\\
\Big\downarrow\widetilde{f} & & \Vert\\
Z/J & \stackrel{\widetilde{\beta}}{\longrightarrow} & X/J
\end{array}\]
Thus, from Proposition \ref{pro:general-retraction}, the induced map $\widetilde{f}\,:\,Y/J\,\longrightarrow\,
Z/J$ is a strong deformation retraction, provided the quotient spaces $Y/J$ and $Z/J$ are CW complexes.
Note however that $f$ is not necessarily a $J$-equivariant strong deformation retract.
\end{rem}

Recall that $G$ is an affine algebraic reductive group, and $K$ is a maximal compact
subgroup of $G$. Suppose that $J$ is a closed subgroup of $K$. 
Then the Zariski closure $H$ of $J$ (in $G$) is a reductive 
subgroup of $G$ with $$J\,=\, H\cap K\, ,$$ and
$J$ is a maximal compact subgroup of $H$.

Consider the homogeneous space $G/H$. Since $H$ is reductive,
we can regard $G/H$ as an affine quotient of the $H$--variety $G$,
where $H$ acts by right translations on $G$. Note that $G/H\,=\,G\quot
H$ is a smooth affine variety. Similarly, we have the quotient $K/J$,
which is a compact manifold.

\begin{lem}
\label{le} There is a canonical inclusion of smooth manifolds $f\,:\,
K/J\,\longrightarrow\, G\quot H$.
\end{lem}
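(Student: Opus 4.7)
The plan is to construct $f$ as the descent of the natural composition $K \hookrightarrow G \twoheadrightarrow G \quot H$, and then to verify that the resulting map is an injective smooth embedding. Since $H$ is reductive and acts on $G$ by right translations, every orbit is a coset $gH$, which is closed in $G$ as a translate of the closed subgroup $H$; consequently the affine quotient $G \quot H$ coincides set-theoretically with the homogeneous space $G/H$, which by the preceding paragraph of the excerpt is a smooth affine variety.

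First I would define $f$. The composite $K \hookrightarrow G \to G/H$, $k \mapsto kH$, is smooth and invariant under the right-translation action of $J$ on $K$ (since $J \subset H$), so it factors through a smooth map
\[
f\,:\, K/J \,\longrightarrow\, G \quot H\,,\qquad [k]\,\longmapsto\, kH\,.
\]
Injectivity is immediate: if $k_1 H = k_2 H$ with $k_1,k_2 \in K$, then $k_1^{-1}k_2 \,\in\, H \cap K \,=\, J$, so $[k_1]=[k_2]$ in $K/J$.

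The main step is to show that $f$ is a smooth immersion. Because $f$ is equivariant with respect to the left-translation actions of $K$ on $K/J$ and on $G/H$, by homogeneity it suffices to verify that the differential of $f$ is injective at the identity coset $[e]$. Under the standard identifications $T_{[e]}(K/J) \,\cong\, \mathfrak{k}/\mathfrak{j}$ and $T_{[e]}(G/H) \,\cong\, \mathfrak{g}/\mathfrak{h}$, this differential is induced by the inclusion $\mathfrak{k} \hookrightarrow \mathfrak{g}$, and so its kernel is $(\mathfrak{k}\cap \mathfrak{h})/\mathfrak{j}$. The required equality $\mathfrak{k}\cap \mathfrak{h} = \mathfrak{j}$ follows from the real Cartan decompositions $\mathfrak{g} = \mathfrak{k}\oplus i\mathfrak{k}$ and $\mathfrak{h} = \mathfrak{j}\oplus i\mathfrak{j}$, which are available precisely because $J$ is a maximal compact subgroup of the reductive group $H$, as recalled just before the lemma: any $X \in \mathfrak{h}\cap\mathfrak{k}$ written as $X = Y + iZ$ with $Y,Z \in \mathfrak{j}\subset \mathfrak{k}$ must have $Z = 0$ by uniqueness of the decomposition in $\mathfrak{g}$, whence $X = Y \in \mathfrak{j}$.

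The rest is formal: the map $f$ is an injective smooth immersion from the compact manifold $K/J$, and any such map is automatically a topological embedding onto a smooth submanifold of $G \quot H$. The one place the hypotheses are really used is the Lie-algebra identity $\mathfrak{k} \cap \mathfrak{h} = \mathfrak{j}$, which encapsulates the compatibility of the two Cartan decompositions; the remaining verifications (smoothness, $J$-invariance, set-theoretic injectivity) are routine.
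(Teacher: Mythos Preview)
Your proof is correct and rests on the same key fact as the paper's, namely $H\cap K = J$, but the two arguments handle it differently. You invoke $H\cap K = J$ directly (it is stated just before the lemma) to get set-theoretic injectivity in one line; the paper instead reproves this inclusion on the spot by writing $h=k_2^{-1}k_1\in H$ in its polar form $h=j\exp(p)$ with $j\in J$, $p\in\mathfrak{p}$, and then using uniqueness of the polar decomposition in $G$ to force $\exp(p)=e$. Conversely, you go further than the paper by checking that $f$ is actually a smooth embedding: you verify the differential is injective at $[e]$ via the Lie-algebra identity $\mathfrak{k}\cap\mathfrak{h}=\mathfrak{j}$ (deduced from the compatible Cartan decompositions), and then use compactness of $K/J$ to upgrade the injective immersion to an embedding. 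The paper's proof stops at set-theoretic injectivity and leaves the ``smooth manifolds'' part implicit.
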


\begin{proof}
Let $f\,:\, K/J\,\longrightarrow\, G/H=G\quot H$ be the canonical
map that sends any coset $kJ$, $k\,\in\, K$, to the coset
$kH$. This $f$ is well-defined since $J\,\subset\, H$. Take any 
$k_1\, ,k_2\, \in\, K$ such that $k_{1}H\,=\, k_{2}H$.
To prove the lemma it suffices to show that
\begin{equation}\label{ls}
k_{1}J\,=\, k_{2}J\, .
\end{equation}

To prove \eqref{ls}, take $h\, :=\, k^{-1}_2 k_{1}\,\in \,H$.
Let
\begin{equation}\label{fp}
\text{Lie}(H)\,=\,\text{Lie}(J)\oplus\mathfrak{p}
\end{equation}
be the Cartan decomposition of the Lie algebra of $H$. Let
\[
h\,=\, j\exp{(p)}
\]
be the polar decomposition,
where $j\,\in\, J$ and $p\,\in\,\mathfrak{p}$. Then,
\begin{equation}
j^{-1}k_{2}^{-1}k_{1}\,=\,\exp{(p)}\,.\label{c3}
\end{equation}
 The left--hand side of \eqref{c3} is in the maximal compact subgroup
$K$ of $G$, so by the uniqueness of polar decomposition in $G$,
we have that \[
\exp{(p)}\,=e~\,\text{~and~}\,~k_{1}\,=\, k_{2}j\, .\]
Therefore, \eqref{ls} holds.
\end{proof}

\begin{lem}\label{biaction}
There is a $K\times K$-equivariant strong deformation retraction from $G$ to $K$. 
\end{lem}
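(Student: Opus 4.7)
The plan is to use the standard polar decomposition of the complex reductive group $G$ and verify that the obvious homotopy it produces is equivariant for both sides of the $K\times K$ action.

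First I would fix the Cartan decomposition $\mathfrak{g}=\mathfrak{k}\oplus\mathfrak{p}$ of the real Lie algebra of $G$, where $\mathfrak{p}=i\mathfrak{k}$. A classical theorem gives that the map
\[
\Phi\,:\,K\times\mathfrak{p}\,\longrightarrow\, G\, ,\quad (k\, ,X)\,\longmapsto\, k\exp(X)
\]
is a diffeomorphism, so every $g\in G$ has a unique polar decomposition $g=k(g)\exp(X(g))$ with $k(g)\in K$ and $X(g)\in\mathfrak{p}$, and the maps $g\mapsto k(g)$, $g\mapsto X(g)$ are continuous (in fact smooth). Using this, I would define
\[
\phi\,:\,[0\, ,1]\times G\,\longrightarrow\, G\, ,\qquad \phi(t\, ,g)\,:=\, k(g)\exp\bigl((1-t)X(g)\bigr)\, .
\]
Continuity of $\phi$ follows from continuity of $k(g)$ and $X(g)$. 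By inspection, $\phi(0\, ,g)=g$, $\phi(1\, ,g)=k(g)\in K$, and if $g=k\in K$ then $X(g)=0$, so $\phi(t\, ,k)=k$ for all $t$. Hence $\phi$ is a strong deformation retraction from $G$ onto $K$.

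The remaining point, which I expect to be the only subtle one, is the $K\times K$-equivariance, where the action is $(k_1\, ,k_2)\cdot g = k_1 g k_2^{-1}$. The key observation is that $\mathrm{Ad}(K)$ preserves $\mathfrak{p}$ (since the Cartan decomposition is $\mathrm{Ad}(K)$-stable), so for any $k_1\, ,k_2\in K$,
\[
k_1 g k_2^{-1}\,=\,k_1 k(g)\exp(X(g))\,k_2^{-1}\,=\,\bigl(k_1 k(g) k_2^{-1}\bigr)\exp\bigl(\mathrm{Ad}(k_2)X(g)\bigr)
\]
with $k_1 k(g) k_2^{-1}\in K$ and $\mathrm{Ad}(k_2)X(g)\in\mathfrak{p}$. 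By uniqueness of polar decomposition this is the polar decomposition of $k_1 g k_2^{-1}$, i.e.\ $k(k_1 g k_2^{-1})=k_1 k(g) k_2^{-1}$ and $X(k_1 g k_2^{-1})=\mathrm{Ad}(k_2)X(g)$. Therefore
\[
\phi\bigl(t\, ,k_1 g k_2^{-1}\bigr)\,=\,k_1 k(g) k_2^{-1}\exp\bigl((1-t)\mathrm{Ad}(k_2)X(g)\bigr)\,=\,k_1\bigl(k(g)\exp((1-t)X(g))\bigr)k_2^{-1}\,=\,k_1\,\phi(t\, ,g)\,k_2^{-1}\, ,
\]
proving the required equivariance.

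The main obstacle is really just bookkeeping: one must confirm that $\mathfrak{p}$ is $\mathrm{Ad}(K)$-invariant (immediate from $[\mathfrak{k}\, ,\mathfrak{p}]\subset\mathfrak{p}$ in a Cartan decomposition) and then invoke uniqueness of the polar decomposition to transport the conjugation on $K$ through $\exp$. Everything else—continuity, fixing $K$ pointwise, and the endpoint conditions—is immediate from the diffeomorphism $\Phi$.
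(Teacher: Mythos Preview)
Your proof is correct and is essentially identical to the paper's: both use the polar decomposition $G\cong K\times\mathfrak{p}$, define the retraction by scaling the $\mathfrak{p}$-component, and verify $K\times K$-equivariance via $\mathrm{Ad}(K)$-invariance of $\mathfrak{p}$ together with uniqueness of the polar decomposition. The only cosmetic difference is that you parametrize the homotopy by $(1-t)$ where the paper uses $t$, and you spell out a few points (continuity, $[\mathfrak{k},\mathfrak{p}]\subset\mathfrak{p}$) that the paper leaves implicit.
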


\begin{proof}
The polar decomposition of $G$ is given by $K\times \mathfrak{p}$ and the diffeomorphism is given by $(k,p)\mapsto k\mathrm{exp}(p)$.
Then $\phi_t(k\mathrm{exp}(p))=k\mathrm{exp}(tp)$ is a strong deformation retraction. Consider $a,b\in K$.  Then 
\begin{eqnarray*}a\phi_t(k\mathrm{exp}(p))b^{-1}&=&ak\mathrm{exp}(tp)b^{-1}\\ &=&akb^{-1}\mathrm{exp}(t\mathrm{Ad}_b(p))\\ &=&\phi_t(akb^{-1}\mathrm{exp}(\mathrm{Ad}_b(p)))\\ &=&\phi_t(ak\mathrm{exp}(p)b^{-1}).\end{eqnarray*}
\end{proof}

\begin{thm}
\label{thm:SDR-Hom-sp}The inclusion $K/J \hookrightarrow G/H$ from Lemma \ref{le} is a strong deformation retraction.\end{thm}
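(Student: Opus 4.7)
The plan is to factor the inclusion $f:K/J\hookrightarrow G/H$ of Lemma \ref{le} as the composition
\[
K/J\,\hookrightarrow\,G/J\,\stackrel{\pi}{\longrightarrow}\,G\quot H\,=\,G/H,
\]
where $G/J$ denotes the space of right cosets and $\pi$ is the natural projection induced by $J\subset H$. Once each arrow is shown to be a homotopy equivalence, Proposition \ref{pro:general-retraction} can be applied with $X=G/H$, $\alpha=f$ and $\beta=\mathrm{id}_{G/H}$ (so the required commutativity $\alpha=\beta\circ f$ is trivial) to upgrade $f$ to a strong deformation retraction.

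For the first arrow, Lemma \ref{biaction} provides a $(K\times K)$-equivariant strong deformation retraction $\phi_{t}:G\to K$. Restricting the group to $\{e\}\times J\subset K\times K$ makes $\phi_{t}$ equivariant for the right $J$-action on $G$, and it still fixes $K\subset G$ pointwise; hence by Remark \ref{eqsdrrem} it descends to a strong deformation retraction $G/J\to K/J$, so in particular $K/J\hookrightarrow G/J$ is a homotopy equivalence.

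For the second arrow I would regard $G$ as an affine $H$-variety with $H$ acting by right multiplication; this is legitimate because $H$ is reductive and $J$ is its maximal compact subgroup. Theorem \ref{thm:Neeman-Schwarz} then yields a homeomorphism ${\rm KN}_{G}/J\,\stackrel{\sim}{\longrightarrow}\,G\quot H\,=\,G/H$ together with a $J$-equivariant deformation retraction $\Phi_{t}:G\to {\rm KN}_{G}$. Descending $\Phi_{t}$ to the right $J$-quotient and composing with the Neeman--Schwarz homeomorphism produces a map $G/J\to G/H$ sending $gJ$ to $\Phi_{1}(g)H$; because the Kempf--Ness gradient flow keeps $\Phi_{t}(g)$ inside the right $H$-coset of $g$, one has $\Phi_{1}(g)H=gH$, so this composition is exactly the natural projection $\pi$. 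Thus $\pi$ is a homotopy equivalence (a homotopy equivalence followed by a homeomorphism). The composition of the two arrows sends $kJ$ to $kH$, i.e.\ coincides with $f$, so $f$ itself is a homotopy equivalence; since $K/J$ and $G/H$ are smooth manifolds and $f$ is a smooth embedding, Proposition \ref{pro:general-retraction} then delivers the conclusion.

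The one geometric input I expect to be the main obstacle is identifying the descended Kempf--Ness retraction with the natural projection $\pi$: one needs the flow to move each $g\in G$ along its right $H$-coset, so that after passing to the $J$-quotient and applying the Neeman--Schwarz homeomorphism one genuinely recovers $\pi$ rather than merely some homotopic map. Everything else is routine bookkeeping with the machinery already assembled in Lemmas \ref{le} and \ref{biaction}, Remark \ref{eqsdrrem} and Theorem \ref{thm:Neeman-Schwarz}.
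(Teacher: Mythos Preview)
Your approach is correct and uses the same ingredients as the paper---Lemma \ref{biaction}, Theorem \ref{thm:Neeman-Schwarz}, and Proposition \ref{pro:general-retraction}---but the organization differs. The paper sets up the commutative triangle $K\hookrightarrow{\rm KN}_G\hookrightarrow G$ of $J$-equivariant inclusions \emph{before} taking quotients (using that $K\subset{\rm KN}_G$, a special case of Lemma \ref{cwinc}), then invokes Remark \ref{eqsdrrem} to conclude that $K/J\hookrightarrow{\rm KN}_G/J$ is a strong deformation retraction, and finishes with the Neeman--Schwarz homeomorphism ${\rm KN}_G/J\cong G/H$. You instead factor $f$ through $G/J$ and apply Proposition \ref{pro:general-retraction} in the degenerate form $\beta=\mathrm{id}$; this forces you to identify the descended Neeman--Schwarz map with the natural projection $\pi$, which is precisely the obstacle you flag. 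That obstacle dissolves once you note that $H$ acts freely on $G$ by right translation, so every $H$-orbit $gH$ is closed and the Neeman--Schwarz retraction, which carries each point into its orbit closure, stays inside $gH$. In fact you could bypass Kempf--Ness for the second arrow altogether: $\pi:G/J\to G/H$ is a fiber bundle with contractible fiber $H/J$ (polar decomposition of $H$), hence a homotopy equivalence directly. Either way your route is valid; the paper's organization simply avoids having to name the map $G/J\to G/H$, at the cost of needing $K\subset{\rm KN}_G$.
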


\begin{proof}
In \eqref{kns}, set $X\,=\,G$ equipped with the
right--translation action of $H$. By the Kempf--Ness
construction, we obtain a commutative diagram of natural inclusions
\begin{equation}\label{ph}
\begin{array}{ccc}
K & \hookrightarrow & G\\
\Big\downarrow\varphi & & \Vert\\
{\rm KN}_{G} & \hookrightarrow & G.\end{array}
\end{equation}
As a special case of Lemma \ref{cwinc} below, $K\,\hookrightarrow\, {\rm KN}_{G}$.  

As semi-algebraic spaces, $K$ can be taken to be a sub-complex of the CW-complex ${\rm KN}_{G}$, as can the quotients $K/J\hookrightarrow {\rm KN}_{G}/J$ (\cite{BCR, Hi, Sch1}).

From Theorem \ref{thm:KN} we know that ${\rm KN}_{G}$ is closed under the right--translation action of $J\, :=\, H\bigcap K$. So all the 
maps in \eqref{ph} are $J$-equivariant. 

The top map in \eqref{ph} induces a right $J$-equivariant strong deformation retraction by Lemma \ref{biaction}.  The bottom map likewise induces a $J$-equivariant strong deformation retraction by Theorem \ref{thm:Neeman-Schwarz}.  Therefore, by Proposition \ref{pro:general-retraction} and Remark \ref{eqsdrrem}, the inclusion
$$\widetilde{\varphi}\, :\, K/J\,\longrightarrow\, {\rm KN}_{G}/J$$
given by $\varphi$ in \eqref{ph} is a strong deformation 
retraction. Finally, by the homeomorphism
${\rm KN}_{G}/J\,\cong \,G/ H$ in Theorem
\ref{thm:Neeman-Schwarz}, we obtain a strong deformation retraction
of $G/ H$ to $K/J$. 
\end{proof}

\begin{rem}\label{orbitremark}
Fix an element $k\,\in\, K$. Let
\[
G\cdot k\,:=\,\left\{ gkg^{-1}\, \mid\, g\in G\right\}
\]
be its orbit under the conjugation action of $G$. Let
\[
G_{k}\,:=\,\left\{ g\in G\,\mid\, gkg^{-1}=k\right\} \,
\]
be the centralizer. Since $k$ is semisimple, it follows that $G_k$ is
a complex reductive subgroup of $G$ \cite[\S~2.2, p. 26, Theorem]{Hu}.
Sending any $g\, \in\, G$ to $gkg^{-1}\, \in\,
G\cdot k$, we get an identification of $G/G_{k}$ with $G\cdot k$.
This identification takes the the left--translation action of
$G$ on $G/G_{k}$ to the adjoint action of $G$ on $G\cdot k$. Moreover, this 
mapping $G/G_{k}\, \longrightarrow\, G\cdot k$ defines an isomorphism of 
quasi-projective varieties \cite[p. 83]{hum}.

Similarly, the $K$--orbit $K\cdot k$ gets identified with $K/K_{k}$,
where $K_{k}$ is the centralizer of $k$ inside $K$. Note that
$K_{k}\,=\, G_k\bigcap K$.
\end{rem}

\begin{cor}
\label{cor:orbit-SDR} Fix any $k\,\in\, K$. The inclusion
of orbits $K\cdot k\,\subset\, G\cdot k$ is a strong deformation
retraction.
\end{cor}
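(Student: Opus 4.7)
The plan is to reduce the corollary to Theorem \ref{thm:SDR-Hom-sp} by re-expressing both orbits as homogeneous coset spaces via Remark \ref{orbitremark}. First, I will invoke Remark \ref{orbitremark}: the orbit map $g\mapsto gkg^{-1}$ gives an isomorphism of quasi-projective varieties $G/G_{k}\cong G\cdot k$ whose restriction is a homeomorphism $K/K_{k}\cong K\cdot k$. Since $K\subset G$ and $K_{k}\subset G_{k}$, the inclusion $K\cdot k\hookrightarrow G\cdot k$ corresponds under these identifications to the natural inclusion $K/K_{k}\hookrightarrow G/G_{k}$ coming from the coset map $kK_{k}\mapsto kG_{k}$.

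Next, to apply Theorem \ref{thm:SDR-Hom-sp}, I take $J:=K_{k}$, which is a closed subgroup of $K$, and I must verify that the Zariski closure $H$ of $K_{k}$ in $G$ equals $G_{k}$; this is the only non-trivial verification. Since $K_{k}\subset G_{k}$ and $G_{k}$ is Zariski closed in $G$, one inclusion $H\subset G_{k}$ is immediate. For the reverse inclusion, I will use that $G_{k}$ is a complex reductive subgroup of $G$ (Remark \ref{orbitremark}) and argue that $K_{k}=K\cap G_{k}$ is a maximal compact subgroup of $G_{k}$; the standard fact that a maximal compact subgroup of a complex reductive group is Zariski dense then gives $G_{k}\subset H$.

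The step I expect to be the main (minor) obstacle is showing that $K\cap G_{k}$ is in fact a maximal compact subgroup of $G_{k}$. My plan is to note that the Cartan involution $\theta$ of $G$ with fixed-point set $K$ satisfies $\theta(k)=k$ (since $k\in K$), and hence for $g\in G$ one has $gk=kg$ if and only if $\theta(g)k=k\theta(g)$. Thus $G_{k}$ is $\theta$-stable, $\theta|_{G_{k}}$ is a Cartan involution of $G_{k}$, and its fixed-point set $K\cap G_{k}=K_{k}$ is a maximal compact subgroup of $G_{k}$, as required.

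With $H=G_{k}$ established, Theorem \ref{thm:SDR-Hom-sp} produces a strong deformation retraction of $G/G_{k}$ onto $K/K_{k}$. Transporting this via the orbit isomorphisms of Remark \ref{orbitremark} yields the desired strong deformation retraction of $G\cdot k$ onto $K\cdot k$.
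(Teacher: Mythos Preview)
Your proposal is correct and follows essentially the same route as the paper: identify the orbits with homogeneous spaces via Remark \ref{orbitremark}, set $J=K_{k}$ and $H=G_{k}$, and invoke Theorem \ref{thm:SDR-Hom-sp}. The only difference is that you supply an argument (via the Cartan involution and Zariski density of maximal compacts) for the claim that the Zariski closure of $K_{k}$ in $G$ equals $G_{k}$, whereas the paper simply asserts this fact.
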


\begin{proof}
The group $K_{k}$ is compact because it is closed in $K$.
Its Zariski closure in $G$ is $G_k$. In view of the canonical inclusion
$f\,:\, K/K_{k}\,\longrightarrow\, G/G_{k}$
in Lemma \ref{le}, we can apply Theorem \ref{thm:SDR-Hom-sp} with
$J\,=\, K_{k}$ and $H\,=\, G_{k}$. Therefore,
$K\cdot k$ is a strong deformation retraction of $G\cdot k$.
\end{proof}

Note that the retraction in Theorem \ref{thm:SDR-Hom-sp} is more 
general than that in Corollary \ref{cor:orbit-SDR}, since not 
every closed subgroup $J\subset K$ is the stabilizer of some 
$k\,\in\, K$.

\section{Parabolic Character Varieties of a Free Group}

Let $m\,\geq\, 0$ be an integer, and let $K_{i}$ for $1\leq i \leq m$ be
closed subgroups of the compact group $K$. For each $1\,\leq\, 
i\,\leq\, m$, let $G_i\, \subset\, G$ be the Zariski closure of
$K_i$; note that $G_i$ is reductive, and $\text{Lie}(G_i)\,=\,
\text{Lie}(K_i)\oplus \sqrt{-1}\cdot \text{Lie}(K_i)$. Define
\begin{equation}\label{cX}
\mathcal{X}_{m,n}:=\left(\prod_{i=1}^mG/G_{i}\right)\times G^{n}
\end{equation}
and
\begin{equation}\label{cY}
\mathcal{Y}_{m,n}:=\left(\prod_{i=1}^mK/K_{i}\right)\times K^{n}\, .
\end{equation}

Since $G\,=\,G/ G_{j}$ if $K_{j}=\{e\}$, the parameter $n$ could be removed; but it is retained because it will be useful for defining parabolic character varieties later on.  As usual, the zero-fold Cartesian product $X^0$ consists of only the empty tuple.  So $\mathcal{X}_{0,n}=G^n$ and $\mathcal{Y}_{0,n}=K^n$.

Let $K$ act as left--translations on the homogeneous spaces $K/K_{j}$ and $G/G_{j}$ for $1\leq j\leq m$; the group $K$ acts on 
$K$ and $G$ through inner automorphisms. These actions provide an action of $K$ on both $\mathcal{Y}_{m,n}$ and $\mathcal{X}_{m,n}$. Clearly, the map \begin{equation}\label{f}
f:\:\,\mathcal{Y}_{m,n}\,\hookrightarrow\,\mathcal{X}_{m,n}
\end{equation}(see \eqref{cX} and \eqref{cY}) is $K$-equivariant, and it is injective by Lemma \ref{le}. 

The $K$-equivariant strong deformation retraction in \cite{FL1}, and the strong deformation retraction in Theorem \ref{thm:SDR-Hom-sp} together prove the following proposition.

\begin{prop}\label{topsdr}
For any $m, n \,\geq\, 0$, there is a strong deformation retraction from $\mathcal{X}_{m,n}$ to $\mathcal{Y}_{m,n}$.
\end{prop}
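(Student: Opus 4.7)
The plan is to assemble the required retraction from pieces on each cartesian factor of $\mathcal{X}_{m,n}=\bigl(\prod_{i=1}^{m}G/G_{i}\bigr)\times G^{n}$. By hypothesis, for each $i\in\{1,\ldots,m\}$ the group $K_{i}$ is a closed subgroup of $K$ whose Zariski closure in $G$ is $G_{i}$, so Theorem \ref{thm:SDR-Hom-sp} applies with $J=K_{i}$ and $H=G_{i}$ and supplies a strong deformation retraction
$$\phi_{i}\,:\,[0,1]\times G/G_{i}\,\longrightarrow\, G/G_{i}$$
of $G/G_{i}$ onto $K/K_{i}$, extending the inclusion of Lemma \ref{le}. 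On the remaining factor $G^{n}$, the $K$-equivariant strong deformation retraction of \cite{FL1} furnishes a homotopy
$$\psi\,:\,[0,1]\times G^{n}\,\longrightarrow\, G^{n}$$
of $G^{n}$ onto $K^{n}$.

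Next, I would combine these into a single product homotopy
$$\Phi\,:\,[0,1]\times\mathcal{X}_{m,n}\,\longrightarrow\,\mathcal{X}_{m,n}\, ,\qquad \Phi\bigl(t,x_{1},\ldots,x_{m},y\bigr)\,:=\,\bigl(\phi_{1}(t,x_{1}),\ldots,\phi_{m}(t,x_{m}),\psi(t,y)\bigr)\, ,$$
with $x_{i}\in G/G_{i}$ and $y\in G^{n}$. The three defining properties of a strong deformation retraction can then be checked component by component: at $t=0$ every $\phi_{i}$ and $\psi$ is the identity, so $\Phi(0,\cdot)=\mathrm{id}$; at $t=1$ each $\phi_{i}(1,x_{i})\in K/K_{i}$ and $\psi(1,y)\in K^{n}$, so the image lies in $\mathcal{Y}_{m,n}$; and for any point of $\mathcal{Y}_{m,n}$ each component homotopy fixes its coordinate, so $\Phi$ fixes $\mathcal{Y}_{m,n}$ pointwise throughout.

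There is essentially no obstacle here: the argument is just a product of two strong deformation retractions already in hand. The only detail to match carefully is the hypothesis of Theorem \ref{thm:SDR-Hom-sp} for the pair $(K_{i},G_{i})$, which is immediate from the standing assumption that $G_{i}$ is the Zariski closure of $K_{i}$. (For the subsequent descent to the parabolic character variety, one would additionally need that the retractions can be chosen $K$-equivariantly for the left-translation action on each $G/G_{i}$ and the conjugation action on $G^{n}$; this is explicit in \cite{FL1} and is encoded in the Kempf--Ness step of the proof of Theorem \ref{thm:SDR-Hom-sp}, but the present statement does not require it.)
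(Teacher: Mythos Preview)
Your proof is correct and follows essentially the same approach as the paper: the paper's one-line argument invokes Theorem~\ref{thm:SDR-Hom-sp} for each homogeneous factor $G/G_i$ and the $K$-equivariant retraction of \cite{FL1} for the $G^n$ factor, and you have simply spelled out the product homotopy that this entails. Your parenthetical remark about $K$-equivariance is apt---the paper does not claim it here and handles the descent to quotients separately in Theorem~\ref{thm:product-sdr} via the Kempf--Ness machinery.
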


The group $(\prod_{i=1}^mG_i)\times G$ acts on $G^m\times G^n$ as follows:  $$(h_1,...,h_m,g)\cdot (f_1,...,f_m,g_1,...,g_n)=(gf_1h_1^{-1},...,gf_mh_m^{-1},gg_1g^{-1},...,gg_ng^{-1}).$$  It is easy to see that the two GIT quotients $(G/G_1 \times \cdots\times G/G_m \times G^n)\quot G$ and $(G^m \times G^n)\quot(\prod_{i=1}^m G_i\times G)$ are isomorphic; likewise for the compact quotients $(K^m\times K^n)/(\prod_{i=1}^m K_i\times K)$ and  $\mathcal{Y}_{m,n}/K.$

One can apply the Kempf-Ness construction to the affine $( \prod_{i=1}^mG_i)\times G$--variety $G^m\times G^n$. 

\begin{lem}\label{cwinc}
$K^m\times K^n$ is a subcomplex of $\mathrm{KN}_{G^m\times G^n}$.
\end{lem}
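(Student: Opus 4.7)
The plan is to prove the lemma in three steps: first, construct an appropriate $\Gamma$-equivariant affine embedding, where $\Gamma := \bigl(\prod_{i=1}^{m}G_{i}\bigr)\times G$ with maximal compact $\Gamma_{K}:=\bigl(\prod_{i=1}^{m}K_{i}\bigr)\times K$; second, verify the Kempf--Ness critical-point condition at every point of $K^{m}\times K^{n}$; third, promote the resulting set-theoretic inclusion to an inclusion of CW complexes via semialgebraic triangulation.

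For the embedding, I would fix a faithful algebraic representation $\rho : G\hookrightarrow \GL(W)$ with $\rho(K)\subset\U(W)$ for some $K$-invariant Hermitian inner product on $W$. The decisive choice is to pass to $\sigma := \rho\oplus\rho^{*} : G\to \GL(E)$ with $E := W\oplus W^{*}$, for which $\det\sigma\equiv 1$ and hence $\tr\bigl(d\sigma(X)\bigr)=0$ for every $X\in\text{Lie}(G)$. I then set
\[
V := \bigoplus_{i=1}^{m+n}\End(E), \qquad \iota:G^{m}\times G^{n}\hookrightarrow V, \qquad (f_{1},\ldots,g_{n})\mapsto\bigl(\sigma(f_{1}),\ldots,\sigma(g_{n})\bigr),
\]
where on the $i$-th summand for $1\le i\le m$ only the $i$-th $G_{i}$-factor and the $G$-factor of $\Gamma$ act, by $(g,h)\cdot T = \sigma(g)\,T\,\sigma(h)^{-1}$, while on each of the last $n$ summands only $G$ acts, by conjugation $T\mapsto \sigma(g)\,T\,\sigma(g)^{-1}$. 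The direct sum of Frobenius inner products is $\Gamma_{K}$-invariant (since $\sigma$ maps $K$ and each $K_{i}$ into $\U(E)$), and $\iota$ is $\Gamma$-equivariant by construction.

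For the Kempf--Ness check, fix $x=(k_{1},\ldots,k_{m},\ell_{1},\ldots,\ell_{n})\in K^{m}\times K^{n}$. The function $F_{x}(\gamma)=||\iota(\gamma\cdot x)||^{2}$ is left-invariant under $\Gamma_{K}$, so $(dF_{x})_{e}$ vanishes on $\text{Lie}(\Gamma_{K})$; by the Cartan decomposition $\text{Lie}(\Gamma)=\text{Lie}(\Gamma_{K})\oplus i\,\text{Lie}(\Gamma_{K})$ it remains to handle directions $i\zeta$ with $\zeta=(\zeta_{1},\ldots,\zeta_{m},\zeta_{0})\in\prod_{i=1}^{m}\text{Lie}(K_{i})\times\text{Lie}(K)$. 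The direct sum structure splits $F_{x}$ as $\sum_{i=1}^{m}\phi_{i}+\sum_{j=1}^{n}\psi_{j}$, with $\phi_{i}$ depending only on $(g,h_{i})$ and $\psi_{j}$ only on $g$. Using unitarity of $\sigma(k_{i})$ and $\sigma(\ell_{j})$, Hermiticity of $i\,d\sigma(\zeta_{0})$ and $i\,d\sigma(\zeta_{i})$, and cyclicity of the trace, a short calculation gives $\psi_{j}'(0)=0$ (conjugation leaves the trace unchanged) and $\phi_{i}'(0)=2i\bigl(\tr d\sigma(\zeta_{0})-\tr d\sigma(\zeta_{i})\bigr)=0$ by our choice of $\sigma$. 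Thus $(dF_{x})_{e}=0$, so $x\in\mathrm{KN}_{G^{m}\times G^{n}}$.

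Finally, $K^{m}\times K^{n}$ is a compact real-algebraic (hence semialgebraic) subset of $G^{m}\times G^{n}$ contained in the semialgebraic set $\mathrm{KN}_{G^{m}\times G^{n}}$, so the triangulation theorem for pairs of semialgebraic sets \cite{BCR, Hi} supplies compatible triangulations realizing $K^{m}\times K^{n}$ as a subcomplex, exactly as in the proof of Theorem \ref{thm:SDR-Hom-sp}. The main obstacle I anticipate is the trace term in $\phi_{i}'(0)$: for a generic faithful representation it does not vanish and the Kempf--Ness condition fails at points of $K^{m}\times K^{n}$; the remedy is to use $\sigma=\rho\oplus\rho^{*}$, whose image lies in $\mathrm{SL}(E)$ and so forces $\tr\circ d\sigma\equiv 0$ on $\text{Lie}(G)$.
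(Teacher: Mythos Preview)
Your proof is correct and follows essentially the same two-step strategy as the paper: first show $K^{m}\times K^{n}\subset\mathrm{KN}_{G^{m}\times G^{n}}$ by verifying the Kempf--Ness critical-point condition, then upgrade to a subcomplex via semialgebraic triangulation \cite{BCR,Hi}. The paper outsources the first step to the computation in \cite[Proposition~A.1]{FL2}, whereas you supply a self-contained argument; your device of replacing an arbitrary faithful $\rho$ by $\sigma=\rho\oplus\rho^{*}$ so that $\tr\circ d\sigma\equiv 0$ (forcing the left/right translation contributions $\phi_{i}'(0)$ to vanish) is exactly the sort of normalization one needs, and without it the claim would indeed fail for groups with nontrivial connected center.
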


\begin{proof}
The fact that it is a subset follows from the computation in the proof of Proposition A.1 in \cite{FL2}.  Since both sets are algebraic, $K^m\times K^n$ may be taken to be a subcomplex by \cite{Hi, BCR}.
\end{proof}

Therefore, from the above lemma we deduce the following:

\begin{thm}
\label{thm:product-sdr}There is a strong deformation retraction from
$\mathcal{X}_{m,n}\quot G$ to $\mathcal{Y}_{m,n}/K$.
\end{thm}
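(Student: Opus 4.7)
The plan is to apply the abstract machinery of Proposition~\ref{pro:general-retraction} together with Remark~\ref{eqsdrrem} to the common ambient affine variety $X := G^{m}\times G^{n}$, viewed with the action of the reductive group $L_G := \left(\prod_{i=1}^{m} G_i\right)\times G$ introduced just before Lemma~\ref{cwinc}, whose maximal compact subgroup is $L := \left(\prod_{i=1}^{m} K_i\right)\times K$. Because the two natural isomorphisms $X\quot L_G \cong \mathcal{X}_{m,n}\quot G$ and $(K^{m}\times K^{n})/L \cong \mathcal{Y}_{m,n}/K$ are already recorded in the discussion preceding Lemma~\ref{cwinc}, it suffices to build a strong deformation retraction of $X\quot L_G$ onto $Y/L$, where $Y := K^{m}\times K^{n}$.

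Let $Z := \mathrm{KN}_{X}$ be the Kempf--Ness set of $X$ with respect to the $L_G$-action. Theorem~\ref{thm:Neeman-Schwarz} then supplies an $L$-equivariant deformation retraction of $X$ onto $Z$, together with a homeomorphism $Z/L \cong X\quot L_G$; in particular the inclusion $\beta : Z \hookrightarrow X$ is an $L$-equivariant homotopy equivalence. For the inclusion $\alpha : Y \hookrightarrow X$, I would assemble an $L$-equivariant strong deformation retraction of $X$ onto $Y$ as a product of two pieces. On each of the first $m$ copies of $G$ I would apply the polar-decomposition retraction of Lemma~\ref{biaction}; its $K\times K$-equivariance restricts to the action of $K\times K_i$ given by $(k,h)\cdot g = kgh^{-1}$, which is exactly the bi-equivariance needed on the $i$-th factor. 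On the remaining $n$ factors I would apply the $K$-equivariant retraction of $G^{n}$ onto $K^{n}$ for the diagonal conjugation action from \cite{FL1}. Multiplying these retractions together produces a single $L$-equivariant strong deformation retraction, so $\alpha$ is an $L$-equivariant homotopy equivalence.

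Lemma~\ref{cwinc} provides the $L$-equivariant inclusion $f : Y \hookrightarrow Z$ as a subcomplex, and the diagram
\[
\begin{array}{ccc}
Y & \stackrel{\alpha}{\longrightarrow} & X\\
\Big\downarrow f & & \Vert\\
Z & \stackrel{\beta}{\longrightarrow} & X
\end{array}
\]
commutes because all three arrows are set-theoretic inclusions. Proposition~\ref{pro:general-retraction} then yields that $f$ is a strong deformation retraction, and, since every map in sight is $L$-equivariant, Remark~\ref{eqsdrrem} upgrades this to a strong deformation retraction $\widetilde{f} : Y/L \hookrightarrow Z/L$. Translating via the identifications above gives the desired strong deformation retraction $\mathcal{Y}_{m,n}/K \hookrightarrow \mathcal{X}_{m,n}\quot G$.

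The main technical hurdle I anticipate is verifying the two hypotheses of Remark~\ref{eqsdrrem}: the genuine $L$-equivariance of the product retraction, which requires tracking how the bi-translation structure of Lemma~\ref{biaction} interacts with the simultaneous diagonal left action of $K$ and the right actions of the various $K_i$; and the CW-complex structure of the quotients $Y/L$ and $Z/L$, which should follow from $Y$ and $Z$ being semi-algebraic, $L$ being a compact Lie group, and the standard triangulation results invoked in the proofs of Theorem~\ref{thm:SDR-Hom-sp} and Lemma~\ref{cwinc}.
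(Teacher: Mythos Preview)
Your proposal is correct and follows essentially the same route as the paper: the same commutative diagram with $Y=K^{m}\times K^{n}$, $Z=\mathrm{KN}_{G^{m}\times G^{n}}$, $X=G^{m}\times G^{n}$, the same invocations of Theorem~\ref{thm:Neeman-Schwarz}, Lemma~\ref{cwinc}, Proposition~\ref{pro:general-retraction} and Remark~\ref{eqsdrrem}, and the same identifications of the quotients. The only cosmetic difference is that the paper justifies the $L$-equivariant retraction $\alpha$ by applying Lemma~\ref{biaction} uniformly to all $m+n$ factors (conjugation being the diagonal sub-action of the bi-action), whereas you split off the last $n$ factors and cite \cite{FL1} instead---both yield the same polar-decomposition retraction.
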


\begin{proof}
The proof is analogous to that of Theorem \ref{thm:SDR-Hom-sp}.
We have a commutative diagram of $(\prod_{i=1}^m K_i)\times K$--equivariant inclusions\[
\begin{array}{ccc}
K^m\times K^n& \stackrel{\alpha}{\hookrightarrow} & G^m\times G^n\\
\Big\downarrow\varphi & & \Vert\\
{\rm KN}_{G^m\times G^n} & \stackrel{\beta}{\hookrightarrow}
& G^m\times G^n\end{array}\]

The bottom map corresponds to a $(\prod_{i=1}^m K_i)\times K$-equivariant strong deformation retraction by Theorem \ref{thm:Neeman-Schwarz}, the top map corresponds to a $(\prod_{i=1}^m K_i)\times K$-equivariant strong deformation retraction by Lemma \ref{biaction}, and Lemma \ref{cwinc} gives the inclusion of complexes $\varphi$ that descends to a map on quotients that are themselves complexes \cite{Hi,BCR, Sch1}.

Then, by Proposition \ref{pro:general-retraction} and Remark \ref{eqsdrrem}, one obtains a strong deformation retraction from $$ \mathcal{X}_{m,n}\quot G\cong (G^m\times G^n)\quot (\prod_{i=1}^m G_i\times G)\cong {\rm KN}_{G^m\times G^n}/(K\times \prod_{i=1}^m K_i)$$
to the compact quotient $$(K^m\times K^n)/(\prod_{i=1}^m K_i\times K)\cong \mathcal{Y}_{m,n}/K.$$
\end{proof}

Finally, we apply Theorem \ref{thm:product-sdr} to the
parabolic character varieties. Fix
an element \[
\mathbf{h}\,:=\,(h_{1}\,,\cdots\,,h_{m})\,\in\, K^{m}\,.\]
 For $1\leq j \leq m$, let \begin{align*}
C_{j}^{K} & :=\, K\cdot h_{j}\,=\,\left\{ kh_{j}k^{-1}\,\mid\,\,
k\in K\right\} ~\, \text{and}\\
C_{j}^{G} & :=\, G\cdot h_{j}\,=\,\left\{ gh_{j}g^{-1}\,\mid\,\, g\in G\right\} \end{align*}
be the conjugation orbits in $K$ and $G$ respectively. Let $n\,\geq\,0$
be an integer.
Define
\[
\mathsf{H}_{\mathbf{h},n}^{G}\,:=\,
\left\{ (g_{1},\cdots,g_{m+n})\in G^{m+n}\,\mid\,
g_{j}\in C_{j}^{G},\, j=1,\cdots,m\right\} \,=\,
C_{1}^{G}\times\cdots\times C_{m}^{G}\times G^{n}\, .\]
Consider the diagonal action of $G$ on $G^{m+n}$ constructed
using the adjoint action of $G$ on itself. This action clearly
preserves the subset $\mathsf{H}_{\mathbf{h},n}^{G}\,\subset\, G^{m+n}$.
The restriction of this action to the
subgroup $K\, \subset\, G$ preserves the subset
$$
\mathsf{H}_{\mathbf{h},n}^{K}\,:=\,
\left\{ (g_{1},\cdots,g_{m+n})\in K^{m+n}\,\mid\,
g_{j}\in C_{j}^{K},\, j=1,\cdots,m\right\} \,=\,
C_{1}^{K}\times\cdots\times C_{m}^{K}\times K^{n}
$$
of $\mathsf{H}_{\mathbf{h},n}^{G}$.

We have the affine algebraic quotient\[
\mathsf{X}_{\mathbf{h},n}^{G}\,:=\,\mathsf{H}_{\mathbf{h},n}^{G}\quot G\]
 and the compact quotient \[
\mathsf{X}_{\mathbf{h},n}^{K}\,:=\,\mathsf{H}_{\mathbf{h},n}^{K}/K\,,\]
which are called the \textit{parabolic character varieties}.

\begin{cor}
\label{cor-main} Take integers $m\,\geq\, 0$ and $n\, \geq\, 0$.
For any $\mathbf{h}\,\in\, K^{m}$,
there is a strong deformation retraction of $\mathsf{X}_{\mathbf{h},n}^{G}$
onto $\mathsf{X}_{\mathbf{h},n}^{K}$.
\end{cor}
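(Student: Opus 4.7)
The plan is to identify the parabolic varieties $\mathsf{X}_{\mathbf{h},n}^{G}$ and $\mathsf{X}_{\mathbf{h},n}^{K}$ with instances of the quotients $\mathcal{X}_{m,n}\quot G$ and $\mathcal{Y}_{m,n}/K$ appearing in Theorem \ref{thm:product-sdr}, and then to invoke that theorem directly.

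First I would choose, for each $1\leq j\leq m$, the subgroup $K_{j}:=K_{h_{j}}\subset K$ of elements commuting with $h_{j}$, and let $G_{j}:=G_{h_{j}}\subset G$ be the centralizer of $h_{j}$ in $G$. Since $h_{j}\in K$ is semisimple as an element of $G$, Remark \ref{orbitremark} tells us that $G_{j}$ is a complex reductive subgroup of $G$, that $K_{j}=G_{j}\cap K$, and that there are $G$-equivariant (resp.\ $K$-equivariant) isomorphisms
\[
G/G_{j}\,\stackrel{\sim}{\longrightarrow}\,C_{j}^{G}\, ,\qquad K/K_{j}\,\stackrel{\sim}{\longrightarrow}\,C_{j}^{K}\, ,
\]
sending the coset of $g$ (resp.\ $k$) to $gh_{j}g^{-1}$ (resp.\ $kh_{j}k^{-1}$). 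Under these isomorphisms, the left-translation action of $G$ on $G/G_{j}$ corresponds to the adjoint action of $G$ on $C_{j}^{G}$, and likewise for $K$.

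Next I would need to check that these $K_{j}$ and $G_{j}$ satisfy the hypotheses of Section 3, namely that $K_{j}$ is a closed subgroup of $K$ and that $G_{j}$ is the Zariski closure of $K_{j}$ in $G$. Closedness is clear from continuity of the conjugation action. For the Zariski closure, the key observation is that $K_{j}=G_{j}\cap K$ is a maximal compact subgroup of the complex reductive group $G_{j}$ (this follows because the polar decomposition of $G$ restricts to the polar decomposition of $G_{j}$, the Lie algebra $\mathrm{Lie}(G_{j})$ being stable under the Cartan involution since $h_{j}\in K$); hence its Zariski closure in $G$ is all of $G_{j}$, by the standard fact that a maximal compact subgroup of a complex reductive group is Zariski dense.

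With this identification in hand, taking the product of the orbit isomorphisms with the identity on the remaining $G^{n}$ (resp.\ $K^{n}$) factors yields $G$- and $K$-equivariant isomorphisms
\[
\mathsf{H}_{\mathbf{h},n}^{G}\,\cong\,\mathcal{X}_{m,n}\, ,\qquad \mathsf{H}_{\mathbf{h},n}^{K}\,\cong\,\mathcal{Y}_{m,n}\, ,
\]
intertwining conjugation on the $C_{j}^{G}$ (resp.\ $C_{j}^{K}$) factors with left-translation on $G/G_{j}$ (resp.\ $K/K_{j}$), and preserving conjugation on the remaining factors. Passing to the GIT quotient by $G$ (resp.\ the topological quotient by $K$) gives
\[
\mathsf{X}_{\mathbf{h},n}^{G}\,\cong\,\mathcal{X}_{m,n}\quot G\, ,\qquad \mathsf{X}_{\mathbf{h},n}^{K}\,\cong\,\mathcal{Y}_{m,n}/K\, ,
\]
and Theorem \ref{thm:product-sdr} then supplies the desired strong deformation retraction.

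The only real obstacle is the verification that the centralizer $K_{j}$ of a semisimple element is genuinely a maximal compact subgroup of its Zariski closure $G_{j}$; once this compatibility with the Cartan decomposition is in place, the whole statement is a translation through Remark \ref{orbitremark}.
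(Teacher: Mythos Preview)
Your proposal is correct and follows essentially the same route as the paper: identify $\mathsf{H}_{\mathbf{h},n}^{G}$ and $\mathsf{H}_{\mathbf{h},n}^{K}$ with $\mathcal{X}_{m,n}$ and $\mathcal{Y}_{m,n}$ via Remark \ref{orbitremark} by taking $K_j$ and $G_j$ to be the centralizers of $h_j$, and then apply Theorem \ref{thm:product-sdr}. You are in fact a bit more careful than the paper in justifying why $G_j$ is the Zariski closure of $K_j$, which the paper simply asserts.
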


\begin{proof}
If $m=0$ the result follows from \cite{FL1}.  Now assume $m\geq 1$.
Let $\mathbf{h}\,:=\,(h_{1}\,,\cdots\,,h_{m})\,\in\, K^{m}$.
The centralizer of $h_j$ in $G$ (respectively, $K$) will be
denoted by $G_j$ (respectively, $K_j$); so $G_j$ is the
Zariski closure of $K_j$ in $G$, and $\text{Lie}(G_j)\,=\,
\text{Lie}(K_j)\oplus \sqrt{-1}\cdot \text{Lie}(K_j)$. As noted before in Remark \ref{orbitremark},
$G/G_j$ (respectively, $K/K_j$) is identified with the orbit
$G\cdot h_j$ (respectively, $K\cdot h_j$) for the adjoint action.
These give identifications
\[
\mathsf{H}_{\mathbf{h},n}^{G}\,\cong\,\mathcal{X}_{m,n}\]
and\[
\mathsf{H}_{\mathbf{h},n}^{K}\,\cong\, \mathcal{Y}_{m,n}\, ,\]
where the space $\mathcal{Y}_{m,n}$ (respectively,
$\mathcal{X}_{m,n}$) is constructed as in \eqref{cY}
(respectively, \eqref{cX}) using the subgroups
$K_{1},\cdots,K_{m}$ (respectively,
$G_{1},\cdots,G_{m}$). Under these identifications,
the diagonal conjugation action of $K$ on $\mathsf{H}_{\mathbf{h},n}^{K}$
becomes the diagonal action for the left--translation action on the
homogeneous spaces $K/K_{j}$ and the conjugation action on
the factors $K$. A similar statement holds for the actions of $G$
on $\mathcal{X}_{m,n}$ and $\mathsf{H}_{\mathbf{h},n}^{G}$. Therefore, the strong deformation
retraction of $\mathsf{X}_{\mathbf{h},n}^{G}\,:=\,\mathsf{H}_{\mathbf{h},n}^{G}\quot G$
to $\mathsf{X}_{\mathbf{h},n}^{K}\,:=\,\mathsf{H}_{\mathbf{h},n}^{K}/K$
is a direct consequence of Theorem \ref{thm:product-sdr}.
\end{proof}

\section{The generic case of parabolic character varieties}

In this section we discuss some examples, and the generic case, that is the case
when $\mathbf{h}=(x_{1},x_{2},\cdots,x_{m})\in K^{m}$ has a regular component.

\subsection{Elementary examples}

First we consider the case where $m=0$. In this case, 
as there is no parabolic data (no conjugation classes), we get back the 
deformation
retraction from the $G$--character variety to the $K$--character
variety of the free group of rank $n$: \[
\mathsf{X}_{n}^{K}\,=\,\hom(\mathsf{F}_{n},K)/K\:\hookrightarrow\:\mathsf{X}_{n}^{G}\,=\,\hom(\mathsf{F}_{n},G)\quot G,\]
that was obtained in \cite{FL1}.
Note in particular, for $n=1$,
that we have the inclusion\begin{equation}
K/K\hookrightarrow G\quot G.\label{eq:KandGclasses}\end{equation}

For a rather trivial example, let us consider $m=1$ and $n=0$. In
this case the vector $\mathbf{h}\in K^{m}$ is given by a single element
$x\in K$. Then, the parabolic representation spaces are single orbits\[
\mathsf{H}_{x,0}^{K}=K\cdot x\]
and \[
\mathsf{H}_{x,0}^{G}\,=\,G\cdot x\, .\]
Since the $K$-action (respectively, the $G$-action) is transitive on these
orbits, the parabolic character varieties consist of a single point:\[
\mathsf{X}_{x,0}^{G}=\,\mathsf{H}_{x,0}^{G}\quot 
G=\,\mathsf{H}_{x,0}^{K}/K=\, \mathsf{X}_{x,0}^{K}\, .\]

\subsection{The case $m=1$}
\label{sect4.1}

Let us consider arbitrary
$n$, a compact group $K$, and again $m=1$, so we are dealing with
only one conjugacy class. Since every element $x\in K$ lies in a
maximal torus of $K$, and all maximal tori of $K$ are conjugate,
it is no loss of generality to assume that $x\in T$, for some fixed
maximal torus $T\subset K$.

Let us now consider the {}``generic'' situation: assume that $x\in T$
is regular, that is $x$ is not fixed by any non-trivial element of
the Weyl group. Such regular elements form a dense set inside $T$,
and moreover it is known that, for regular $x$, the centralizer of
$x$ in $K$ is just the torus: $K_{x}=T$.

In this situation, we have: \[
\mathsf{H}_{x,n}^{K}=(K\cdot x)\times K^{n}\]
and the parabolic character variety can be described as follows. Consider
the action of $T\subset K$ on $K^{n}$ by simultaneous conjugation
and the corresponding quotient space $K^{n}/T$.

\begin{prop}
\label{pro:generic-m1}When $x\in T\subset K$ is regular, there is
a natural isomorphism of quotient spaces\[
\mathsf{X}_{x,n}^{K}=\,\mathsf{H}_{x,n}^{K}/K\quad\stackrel{\sim}{\longrightarrow}\quad K^{n}/T.\]
\end{prop}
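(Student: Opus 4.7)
The plan is to reduce the statement to a standard ``slice'' identification: regularity forces the stabilizer of $x$ to be $T$, which identifies the orbit $K\cdot x$ with the homogeneous space $K/T$, and then $((K/T)\times K^{n})/K$ is naturally $K^{n}/T$ via the classical associated-bundle reduction, where $T$ acts on $K^{n}$ by simultaneous conjugation.

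First I would use the hypothesis $K_{x}=T$ (which holds because $x$ is regular) to write down the $K$-equivariant homeomorphism $K/T \stackrel{\sim}{\longrightarrow} K\cdot x$, $kT\mapsto kxk^{-1}$, where $K$ acts on $K/T$ by left translation. Combined with the identity on the $K^{n}$ factor, this gives a $K$-equivariant identification $\mathsf{H}_{x,n}^{K} \cong (K/T)\times K^{n}$, so the proposition reduces to producing a homeomorphism $((K/T)\times K^{n})/K \stackrel{\sim}{\longrightarrow} K^{n}/T$.

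Next I would write down explicit mutually inverse maps. Define
$$\phi\colon\bigl((K/T)\times K^{n}\bigr)/K \,\longrightarrow\, K^{n}/T,\qquad [(kT,g_{1},\ldots,g_{n})] \,\longmapsto\, [k^{-1}g_{1}k,\ldots,k^{-1}g_{n}k]_{T},$$
and
$$\psi\colon K^{n}/T \,\longrightarrow\, \bigl((K/T)\times K^{n}\bigr)/K,\qquad [g_{1},\ldots,g_{n}]_{T} \,\longmapsto\, [(eT,g_{1},\ldots,g_{n})].$$
Well-definedness of $\phi$ in $k$ is the observation that changing $k$ to $kt$ with $t\in T$ replaces each $k^{-1}g_{i}k$ by $t^{-1}(k^{-1}g_{i}k)t$; $K$-invariance of the rule defining $\phi$ follows since $h\in K$ sends $kT$ to $hkT$ and $g_{i}$ to $hg_{i}h^{-1}$, and these cancel. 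Well-definedness of $\psi$ uses $tT=T$, which gives $t\cdot(eT,g_{1},\ldots,g_{n}) = (eT,tg_{1}t^{-1},\ldots,tg_{n}t^{-1})$. Verifying $\phi\circ\psi=\mathrm{id}$ is immediate, and for $\psi\circ\phi$ the key computation is $k\cdot(eT,k^{-1}g_{i}k) = (kT,g_{i})$.

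Finally I would finish with a soft argument: both $\phi$ and $\psi$ lift to continuous maps at the level of $K\times K^{n}$ and $K^{n}$, respectively, so they are continuous on the quotients; and since $\mathsf{X}_{x,n}^{K}$ and $K^{n}/T$ are both compact Hausdorff (as continuous images of compact spaces), the continuous bijection $\phi$ is automatically a homeomorphism. I expect essentially no obstacle here: the only point requiring attention is the well-definedness of $\phi$ under the change $k\mapsto kt$, and this is precisely what forces the target to be $K^{n}/T$ rather than $K^{n}$ itself.
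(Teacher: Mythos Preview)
Your proof is correct and follows essentially the same route as the paper. The paper constructs the single map $\eta\colon K^{n}/T\to \mathsf{H}_{x,n}^{K}/K$, $[(k_{1},\ldots,k_{n})]\mapsto[(x,k_{1},\ldots,k_{n})]$, and checks directly that it is well-defined, surjective and injective; your $\psi$ is exactly this map after the identification $K\cdot x\cong K/T$, and you additionally write down the inverse $\phi$ and supply the compact--Hausdorff argument for the homeomorphism, which the paper leaves implicit.
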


\begin{proof}
We will describe the isomorphism explicitly. Consider the map:\begin{eqnarray*}
\eta\ :\ K^{n}/T & \longrightarrow & \mathsf{H}_{x,n}^{K}/K\\
{}[(k_{1},\cdots,k_{n})] & \longmapsto & [(x,k_{1},\cdots,k_{n})].\end{eqnarray*}
The map $\eta$ is well defined, since if $(k_{1},\cdots,k_{n})$
and $(h_{1},\cdots,h_{n})$ represent the same class in $K^{n}/T$,
then there is $t\in T$ such that $tk_{j}t^{-1}=h_{j}$ for all $1\leq j \leq n$
and because $x\,=\,txt^{-1}$, the elements $(x,h_{1},\cdots,h_{n})$
and $(x,k_{1},\cdots,k_{n})$ in $\mathsf{H}_{x,n}^{K}$ represents
the same class. The map $\eta$ is surjective: if $(y,h_{1},\cdots,h_{n})\in\mathsf{H}_{x,n}^{K}=(K\cdot x)\times K^{n}$
is arbitrary, since $y\in K\cdot x$, there exists $k\in K$ such
that $x=kyk^{-1}$, so \[
\eta(kh_{1}k^{-1},\cdots,kh_{n}k^{-1})\, =\,
[(x,kh_{1}k^{-1},\cdots,kh_{n}k^{-1})]=[(y,h_{1},\cdots,h_{n})].\]
Finally, $\eta$ is also injective, because if $[(x,k_{1},\cdots,k_{n})]=[(x,h_{1},\cdots,h_{n})]$,
then there is an element $k\in K$ such that $kxk^{-1}=x$ and $kk_{j}k^{-1}=h_{j}$
for all $1\leq j \leq n$. But since $K_{x}=T$ we conclude that $k\in T$
and so $[(k_{1},\cdots,k_{n})]=[(h_{1},\cdots,h_{n})]$.
\end{proof}

The same argument allows us to show that we have a natural isomorphism\[
\mathsf{X}_{x,n}^{G}=\,\mathsf{H}_{x,n}^{G}\quot G\quad\stackrel{\sim}{\longrightarrow}\quad G^{n}\quot T_{\mathbb{C}}\]
whenever we have $G_{x}=T_{\mathbb{C}}$ and $x\in T\subset T_{\mathbb{C}}\subset G$,
for the maximal torus $T_{\mathbb{C}}$ of $G$ which is the complexification
of $T$.

\subsection{The generic case when $m>1$}

The case when $m>1$ is not so easy to describe in general, but the
{}``generic'' case can also be written explicitly as a quotient
by an abelian group of the $m-1$ parabolic character variety. In
fact, we have:
\begin{thm}
Let $x_{1}\in T\subset K$ be a regular element. Let $\mathbf{h}=(x_{1},x_{2},\cdots,x_{m})\in K^{m}$
and $\mathbf{h}'=(x_{2},\cdots,x_{m})\in K^{m-1}$. Then, there are
natural isomorphisms of quotient spaces\[
\mathsf{X}_{\mathbf{h},n}^{K}=\,\mathsf{H}_{\mathbf{h},n}^{K}/K\quad\stackrel{\sim}{\longrightarrow}\quad\mathsf{H}_{\mathbf{h}',n}^{K}/T\]
and\[
\mathsf{X}_{\mathbf{h},n}^{G}=\,\mathsf{H}_{\mathbf{h},n}^{G}\quot G\quad\stackrel{\sim}{\longrightarrow}\quad\mathsf{H}_{\mathbf{h}',n}^{G}\quot T_{\mathbb{C}}.\]
\end{thm}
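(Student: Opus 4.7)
The plan is to generalize the argument of Proposition \ref{pro:generic-m1} almost verbatim. Regularity of $x_1 \in T$ forces the centralizers to be exactly the maximal tori: $K_{x_1}=T$ and (a standard fact for regular semisimple elements) $G_{x_1}=T_{\mathbb{C}}$. Under the identifications $C_1^K\cong K/T$ and $C_1^G\cong G/T_{\mathbb{C}}$ coming from Remark \ref{orbitremark}, fixing the first coordinate to $x_1$ should cut the residual symmetry from $K$ (respectively, $G$) down to the stabilizer $T$ (respectively, $T_{\mathbb{C}}$).

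Concretely, for the compact case I would define
\[
\eta\,:\,\mathsf{H}_{\mathbf{h}',n}^{K}/T\,\longrightarrow\,\mathsf{H}_{\mathbf{h},n}^{K}/K,\qquad [(y_2,\cdots,y_m,k_1,\cdots,k_n)]\,\longmapsto\,[(x_1,y_2,\cdots,y_m,k_1,\cdots,k_n)],
\]
and verify the three standard properties in order, exactly as in the proof of Proposition \ref{pro:generic-m1}: \textbf{well-definedness} uses $tx_1t^{-1}=x_1$ for $t\in T$; \textbf{surjectivity} uses that an arbitrary element of $\mathsf{H}_{\mathbf{h},n}^K$ has first coordinate in $C_1^K=K\cdot x_1$, so can be conjugated back to $(x_1,\ldots)$ by some $k\in K$, with the same $k$ carrying the remaining coordinates into $\mathsf{H}_{\mathbf{h}',n}^K$; \textbf{injectivity} is the essential use of regularity: if $k$ conjugates $(x_1,\mathbf{y})$ to $(x_1,\mathbf{y}')$, then $k\in K_{x_1}=T$, so the two representatives are already $T$-equivalent.

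For the $G$-case, the same set-theoretic map $\eta$ is a bijection by the identical argument with $K$ replaced by $G$ and $T$ by $T_{\mathbb{C}}$. The extra subtlety is that one must verify this bijection underlies an isomorphism of GIT quotients. I would do this by exhibiting $\mathsf{H}_{\mathbf{h},n}^{G}$ as an associated bundle: since $C_1^G\cong G/T_{\mathbb{C}}$, the $G$-equivariant morphism
\[
G\times\mathsf{H}_{\mathbf{h}',n}^{G}\,\longrightarrow\,\mathsf{H}_{\mathbf{h},n}^{G},\qquad (g,y)\,\longmapsto\,g\cdot (x_1,y)
\]
realizes $\mathsf{H}_{\mathbf{h},n}^G$ as the associated bundle $G\times^{T_{\mathbb{C}}}\mathsf{H}_{\mathbf{h}',n}^G$, where $T_{\mathbb{C}}$ acts by $t\cdot(g,y)=(gt^{-1},t\cdot y)$. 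The standard GIT identification $(G\times^{H}Y)\quot G\,\cong\, Y\quot H$ for a reductive subgroup $H\subset G$ (which follows from $\mathbb{C}[G\times Y]^{G\times H}=\mathbb{C}[Y]^H$ via averaging over $G$) then gives the desired isomorphism $\mathsf{X}_{\mathbf{h},n}^{G}\cong \mathsf{H}_{\mathbf{h}',n}^{G}\quot T_{\mathbb{C}}$, and chasing the definitions shows it is realized by the algebraic analogue of $\eta$.

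The main obstacle I anticipate is the GIT descent step. In the compact setting everything is genuinely set-theoretic and the proof of Proposition \ref{pro:generic-m1} transfers with no changes. In the algebraic setting, not every $G$-orbit in $\mathsf{H}_{\mathbf{h},n}^G$ is closed, so one must argue at the level of invariant functions rather than orbits; the associated-bundle presentation is the cleanest way to do this, since it reduces the identification of invariant rings to a single application of reductivity of $T_{\mathbb{C}}$ and $G$. Verifying that the bundle map is genuinely a principal $T_{\mathbb{C}}$-bundle (and not merely so on the level of closed points) relies on $G_{x_1}=T_{\mathbb{C}}$ and the closedness of $C_1^G$, both of which are consequences of $x_1$ being regular semisimple.
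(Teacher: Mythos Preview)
Your proposal is correct and, for the compact case, follows the paper's proof essentially verbatim: the paper simply writes down the map $\eta$ and says ``the proof that $\eta$ is well defined and bijective is the same as before,'' referring back to Proposition~\ref{pro:generic-m1}.

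For the $G$-case the paper is equally brief---it just says ``an analogous map for $\mathsf{X}_{\mathbf{h},n}^{G}$'' and declares the same argument works---whereas you supply an associated-bundle/GIT-descent argument to justify that the set-theoretic bijection actually induces an isomorphism of affine quotients. This is more than the paper provides: the paper implicitly treats the GIT quotient as if it were an orbit space, and your concern that non-closed orbits could obstruct this is legitimate. Your associated-bundle presentation $\mathsf{H}_{\mathbf{h},n}^G \cong G\times^{T_{\mathbb{C}}}\mathsf{H}_{\mathbf{h}',n}^G$ and the invariant-ring identification $(G\times^{H}Y)\quot G\cong Y\quot H$ is a clean way to close this gap, and it is the standard device for such reductions. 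So your approach is the paper's approach plus an honest justification of a step the paper leaves to the reader.
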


\begin{proof}
As in the proof of Proposition \ref{pro:generic-m1}, we can construct
explicit isomorphisms:\begin{eqnarray*}
\eta\ :\ \mathsf{H}_{\mathbf{h}',n}^{K}/T & \longrightarrow & \mathsf{H}_{\mathbf{h},n}^{K}/K\\
{}[(x_{2},\cdots,x_{n},k_{1},\cdots,k_{n})] & \longmapsto & [(x_{1},x_{2},\cdots,x_{n},k_{1},\cdots,k_{n})],\end{eqnarray*}
and an analogous map for $\mathsf{X}_{\mathbf{h},n}^{G}=\,\mathsf{H}_{\mathbf{h},n}^{G}\quot G$.
The proof that $\eta$ is well defined and bijective is the
same as before.
\end{proof}

\begin{rem}
{\rm The above theorem remains true if any one of $x_i$ in $(x_1,\cdots 
,x_m)$ is regular.}
\end{rem}

\section{Relative versus Parabolic Character Varieties}
\label{rcv-pcv}

Let $b\geq1$ be an integer and $\Sigma$ be a genus $g$ surface
with $b$ points removed. Then there is a presentation of the fundamental group of $\Sigma$ as
\begin{equation}
\label{presentation}
\pi_{1}(\Sigma)\cong\langle\alpha_{1},\beta_{1},\cdots,\alpha_{g},\beta_{g},\gamma_{1},\cdots,\gamma_{p}\ |\ \prod_{i=1}^{g}[\alpha_{i},\beta_{i}]\prod_{j=1}^{b}\gamma_{j}=1\rangle,
\end{equation}
where each $\gamma_{j}$ corresponds to a loop around a puncture. Note that
$\pi_{1}(\Sigma)$ is a free group of rank $2g+b-1$.
As discussed in \cite{BeGo} and \cite{La2}, there is a boundary map 
\[
\partial:\hom(\pi_{1}(\Sigma),G)\quot G\to(G\quot G)^{b}\]
 given by 
 \begin{equation}
\partial([\![\rho]\!])=([\![\rho(\gamma_{1})]\!],\cdots,[\![\rho(\gamma_{b})]\!]),\label{boundary}
\end{equation}
where $[\![\rho]\!]$ denotes the equivalence class of $\rho\in\hom(\pi_{1}(\Sigma),G)$. 

In general, $G\quot G$ is an irreducible affine variety isomorphic
to $T\quot W$ where $T$ is a maximal torus and $W$ is the Weyl
group (see \cite{Steinberg}). For a given $\mathbf{b}$ in the
image of $\partial$, the affine variety $\mathsf{X}_{rel}^{\mathbf{b}}:=\partial^{-1}(\mathbf{b})$
is called a \textit{relative character variety}.

Relative character varieties are in one-to-one correspondence with
framed $G$-local systems over $\Sigma$ as described by Fock and
Goncharov in \cite[Section 2, page 39]{FG}, which in turn are studied
in recent work by Biquard, Garc\'{i}a-Prada and Mundet i Riera in
relation with the moduli spaces of parabolic $G$-Higgs bundles \cite{BiGPM}.
This latter work aims at generalizing Simpson's correspondence
\cite{Si1} between $\mathrm{GL}(\ell,\mathbb{C})$-local
systems on $\Sigma$ with fixed monodromy in $U(\ell)$ around the
punctures and parabolic Higgs bundles, to any reductive Lie group
$G$ and any fixed monodromy in $G$. (This was done earlier for classical 
groups in \cite{BS}, and it was done for finite order monodromies around
punctures in \cite{Bi}.)

The parabolic character varieties we consider in this paper can also
be obtained in a similar way. Indeed, let $\mathsf{F}_{m+n}$ be the
free group with generators $e_{1},\cdots ,e_{m+n}$ as before, and consider
the map
\[
\partial_{par}:\hom(\mathsf{F}_{m+n},G)\quot G\cong G^{m+n}\quot G\to(G\quot G)^{m}
\]
 given by 
 \begin{equation}
 \label{delta-par}
\partial_{par}([\![\rho]\!])=([\![\rho(e_{1})]\!],\cdots,[\![\rho(e_{m})]\!]).
\end{equation}
Let $q:G^m\to (G\quot G)^m$ be the canonical quotient map.
For a given $\mathbf{h}=(h_{1},\cdots ,h_{m})\in G^m$, the parabolic
character variety $\mathsf{X}_{\mathbf{h},n}^{G}\subset\hom(\mathsf{F}_{m+n},G)\quot G$
is obtained by sending the first $m$ generators $e_{1},\cdots ,e_{m}$
to the conjugacy classes of $h_{1},\cdots ,h_{m}$. So, we have the 
identification
\[
\mathsf{X}_{\mathbf{h},n}^{G}=\partial_{par}^{-1}(q(\mathbf{h})).
\]
Note that we required the parabolic data $\mathbf{h}\in K^{m}$ for the deformation retraction in Theorem \ref{thm-i} to work. However, we can consider here the more general situation $\mathbf{h}\in G^{m}$.
Let $Z(G)$ be the center of $G$.

\begin{prop} For $n+m\geq 2$, the parabolic character variety $\mathsf{X}^{G}_{\mathbf{h},n}$
is an affine variety such that
$$
\dim \mathsf{X}^{G}_{\mathbf{h},n}\geq (n+m-1)\left(\dim G\right)+\dim Z(G) 
-m\left(\mathrm{rank}\,G\right)
$$
with equality holding for almost every choice of $\mathbf{h}$.
\end{prop}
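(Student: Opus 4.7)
The plan is to exploit the identification $\mathsf{X}^{G}_{\mathbf{h},n} = \partial_{par}^{-1}(q(\mathbf{h}))$ recorded just before the proposition, and apply the fiber dimension theorem to the surjective morphism $\partial_{par}$. Affineness is then immediate, since the character variety $\hom(\mathsf{F}_{m+n},G)\quot G$ is affine and $\mathsf{X}^{G}_{\mathbf{h},n}$ is a closed fiber of $\partial_{par}$ over the (closed) point $q(\mathbf{h})$, hence a closed subvariety of an affine variety.

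The next step is to compute the dimensions of source and target. For the target, the Steinberg isomorphism $G\quot G \cong T/W$ recalled above gives $\dim (G\quot G)^{m} = m\cdot\mathrm{rank}(G)$. For the source $\hom(\mathsf{F}_{m+n},G)\quot G \cong G^{m+n}\quot G$, which is the GIT quotient of the irreducible variety $G^{m+n}$ by simultaneous conjugation (and is itself irreducible for connected reductive $G$), the dimension equals $\dim G^{m+n} - \dim G + \dim S$, where $S$ denotes the generic $G$-stabilizer. When $m+n\geq 2$, one can exhibit tuples $(g_{1},\ldots ,g_{m+n}) \in G^{m+n}$ whose entries generate a Zariski-dense subgroup of $G$, and any such tuple has centralizer exactly $Z(G)$; since this is a Zariski-open condition, $S=Z(G)$ and the source has dimension $(m+n-1)\dim G + \dim Z(G)$.

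That $\partial_{par}$ is surjective is clear: any $(c_{1},\ldots ,c_{m}) \in (G\quot G)^{m}$ is the image of $[\![\rho]\!]$ for any $\rho$ with $\rho(e_{i})$ equal to some chosen lift of $c_{i}$. Applying the fiber dimension theorem to the dominant morphism $\partial_{par}$ between irreducible varieties, every nonempty fiber has dimension at least $\dim(\text{source}) - \dim(\text{target})$, with equality on a dense open subset of the target. This yields the lower bound
$$
\dim \mathsf{X}^{G}_{\mathbf{h},n} \geq (n+m-1)\dim G + \dim Z(G) - m\cdot\mathrm{rank}(G),
$$
together with equality for $\mathbf{h}$ outside a proper closed subset of $G^{m}$.

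The step I expect to be most delicate is the generic-stabilizer computation, namely producing, for $m+n\geq 2$, a tuple in $G^{m+n}$ topologically generating $G$ so that the generic $G$-stabilizer for simultaneous conjugation is indeed $Z(G)$. This is standard for connected reductive $G$ (for instance, regular semisimple elements in suitably chosen maximal tori will do), but it is the one place where the hypothesis $m+n\geq 2$ enters the argument; everything else is a routine application of classical dimension bounds for morphisms between irreducible affine varieties.
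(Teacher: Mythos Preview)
Your proposal is correct and follows essentially the same route as the paper: compute $\dim G^{m+n}\quot G=(m+n-1)\dim G+\dim Z(G)$ using that the generic stabilizer is $Z(G)$ when $m+n\geq 2$, compute $\dim (G\quot G)^m=m\cdot\mathrm{rank}\,G$ via $G\quot G\cong T\quot W$, note that $\partial_{par}$ is surjective, and apply the fiber dimension theorem (the paper cites Shafarevich). The paper also explicitly observes that the open set of equality in $(G\quot G)^m$ pulls back via the surjective map $q:G^m\to (G\quot G)^m$ to a dense open set in $G^m$, which you fold into your final sentence; and your remark on affineness (closed fiber in an affine variety) is a detail the paper leaves implicit.
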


\begin{proof}
Since $n+m\geq 2$, and only $Z(G)$ acts trivially on $G^{n+m}$,
we conclude $d_1:=\dim G^{n+m}\quot G=(n+m-1)\dim G+\dim 
Z(G)$. Since $G\quot G\cong T\quot W$, we have $d_2:=\dim (G\quot 
G)^{m}=m\left(\mathrm{rank}\, G\right)$. The map $\partial_{par}$ is always 
surjective, hence $\dim 
\mathsf{X}^{G}_{\mathbf{h},n}=\partial_{par}^{-1}\left(q(\mathbf{h}))\right)\geq 
d_1-d_2$ for all points $q(\mathbf{h})\in (G\quot G)^{m}$ and with equality 
holding on an open dense subset $U$ of $(G\quot G)^{m}$ \cite[Section 6, 
Theorem 7]{Sh}. Then equality holds for almost all $\mathbf{h}$ since the 
projection $q:G^{m}\longrightarrow (G\quot G)^{m}$ is surjective and 
continuous, and thus $q^{-1}(U)$ is also (Zariski) open and non-empty, and hence dense.
\end{proof}

We now relate the relative character varieties to the parabolic character
varieties. 
Note, however, that relative character varieties depend on the homeomorphism class of the underlying surface. 
On the other hand, character varieties and parabolic character varieties only depend on the free group
$\mathsf{F}_{m+n}$. 
Because of this, we need a concrete identification of the surface group 
$\pi_{1}(\Sigma)$ with a free group, that works for any genus
$g$ surface $\Sigma$ with $b$ points removed.

Using the presentation of $\pi_{1}(\Sigma)$ as in (\ref{presentation}), and supposing
we have $b>m\geq0$, $n,g\geq0$ and $m+n=2g+b-1$, define
the following isomorphism of free groups:
\begin{eqnarray}
\label{iso-free-gps}
\varphi:\ \mathsf{F}_{m+n} & \longrightarrow & \pi_{1}(\Sigma)\\
e_{i} & \longmapsto & \varphi(e_{i}) \nonumber
\end{eqnarray}
given by 
\[
\varphi(e_{i}):=\begin{cases}
\gamma_{i}, & i=1,\cdots ,b-1\\
\alpha_{1+i-b}, & i=b,\cdots ,g+b-1\\
\beta_{1+i-b-g},\ & i=b+g,\cdots ,2g+b-1.\end{cases}
\]

One easily checks that $\varphi$ is an isomorphism because
$$\{\alpha_{1},\cdots 
,\alpha_{g},\beta_{1},\cdots ,\beta_{g},\gamma_{1},\cdots ,\gamma_{b-1}\}$$
is a sequence of free generators of $\pi_{1}(\Sigma)$,
and $\gamma_b$ is expressed as
$$\gamma_b\, = \,
\left(\prod_{i=1}^g[\alpha_i,\beta_i]\prod_{j=1}^{b-1}\gamma_j\right)^{-1}\, 
.$$ 
Suppose that $\rho\in\hom(\pi_{1}(\Sigma),G)$. By composing with $\varphi$,
we obtain a representation $\rho\circ\varphi\in\hom(\mathsf{F}_{m+n},G)$,
and this defines an isomorphism
\[
\varphi^{*}:\hom(\pi_{1}(\Sigma),G)\quot G\to\hom(\mathsf{F}_{m+n},G)\quot G
\]
since this composition is equivariant with respect to the conjugation
action on the representation spaces.

\begin{prop}
\label{prop-diagram}Let $b>m\geq0$.
There is a commutative diagram
\[
\begin{array}{ccc}
\hom(\pi_{1}(\Sigma),G)\quot G & \stackrel{\partial}{\to} & (G\quot G)^{b}\\
\varphi^{*}\downarrow & & \downarrow\pi\\
\hom(\mathsf{F}_{m+n},G)\quot G & \stackrel{\partial_{par}}{\to} & (G\quot G)^{m}\end{array}\]
where $\pi$ is the projection onto the first $m$ factors.
\end{prop}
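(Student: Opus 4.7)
The plan is to verify commutativity by a direct diagram chase, tracing an arbitrary class $[\![\rho]\!] \in \hom(\pi_{1}(\Sigma), G)\quot G$ around both paths. The content of the statement is essentially the compatibility between the pullback isomorphism $\varphi^{*}$ and the two boundary-type maps, and the key numerical input is the hypothesis $b > m$, which guarantees that the first $m$ generators $e_{1}, \ldots, e_{m}$ of $\mathsf{F}_{m+n}$ fall into the first branch of the piecewise definition of $\varphi$.

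First I would observe that $\varphi^{*}$ is well-defined on GIT quotients because pre-composition by a group isomorphism commutes with the conjugation $G$-action on representation spaces, so $[\![\rho]\!] \mapsto [\![\rho \circ \varphi]\!]$ descends from $\hom(\pi_{1}(\Sigma), G)$ to the quotient. Then I would compute the ``top-right'' composition: by \eqref{boundary},
\[
\partial([\![\rho]\!]) = ([\![\rho(\gamma_{1})]\!], \ldots, [\![\rho(\gamma_{b})]\!]),
\]
and applying $\pi$ discards the last $b - m$ components, giving $([\![\rho(\gamma_{1})]\!], \ldots, [\![\rho(\gamma_{m})]\!])$. Next I would compute the ``left-bottom'' composition: by \eqref{delta-par},
\[
\partial_{par}(\varphi^{*}([\![\rho]\!])) = \partial_{par}([\![\rho \circ \varphi]\!]) = ([\![(\rho \circ \varphi)(e_{1})]\!], \ldots, [\![(\rho \circ \varphi)(e_{m})]\!]).
\]

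Finally, the hypothesis $b > m$ implies $m \leq b - 1$, so for every $i \in \{1, \ldots, m\}$ the definition \eqref{iso-free-gps} gives $\varphi(e_{i}) = \gamma_{i}$, hence $(\rho \circ \varphi)(e_{i}) = \rho(\gamma_{i})$. The two tuples therefore agree componentwise, proving the diagram commutes. The only real obstacle is bookkeeping, namely checking that the indexing hypothesis $b > m$ is precisely what is needed so that the $e_{i}$ we care about are sent to loops $\gamma_{i}$ rather than to the generators $\alpha_{\bullet}$ or $\beta_{\bullet}$; no further analysis is required.
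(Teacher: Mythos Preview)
Your proof is correct and follows essentially the same approach as the paper: both compute $\pi\circ\partial$ and $\partial_{par}\circ\varphi^{*}$ on an arbitrary class $[\![\rho]\!]$ and observe that the hypothesis $b>m$ forces $\varphi(e_i)=\gamma_i$ for $1\leq i\leq m$, so the two tuples agree. Your version is slightly more explicit about why $\varphi^{*}$ descends to the quotient and about the inequality $m\leq b-1$, but the argument is the same.
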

\begin{proof}
Let $\rho\in\hom(\pi_{1}(\Sigma),G)$. The boundary of $\Sigma$ corresponds
to the loops $\gamma_{1},\cdots ,\gamma_{b}$. Since $b>m$, commutativity 
follows from the computations
\[
\pi(\partial([\![\rho)]\!])=\pi([\![\rho(\gamma{}_{1})]\!],\cdots,[\![\rho(\gamma_{b})]\!])=([\![\rho(\gamma{}_{1})]\!],\cdots,[\![\rho(\gamma_{m})]\!])
\]
and 
\[
\partial_{par}([\![\varphi^{*}(\rho)]\!])=([\![(\varphi^{*}\rho)(e_{1})]\!],\cdots,[\![(\varphi^{*}\rho)(e_{m})]\!])=([\![\rho(\gamma{}_{1}))]\!],\cdots,[\![\rho(\gamma_{m})]\!]).
\]
This completes the proof.
\end{proof}

\begin{cor}
Let $\mathbf{h}=(h_{1},\cdots ,h_{m})\in G^{m}$ and let 
$\mathbf{b}\in\pi^{-1}(q(\mathbf{h}))$.
Then, with the notation as above, $\mathsf{X}_{rel}^{\mathbf{b}}\subset\mathsf{X}_{\mathbf{h},n}^{G}$
as affine sub-varieties. In fact,
\[
\mathsf{X}_{\mathbf{h},n}^{G}=\bigcup_{\mathbf{b}\in\pi^{-1}(q(\mathbf{h}))}\mathsf{X}_{rel}^{\mathbf{b}}.\]
\end{cor}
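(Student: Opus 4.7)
The plan is to extract both claims directly from the commutative diagram in Proposition \ref{prop-diagram}, using the fact that $\varphi^{*}$ is an isomorphism that intertwines $\partial$ with $\partial_{par}$ up to the projection $\pi$. Throughout, I will identify $\mathsf{X}^{\mathbf{b}}_{rel}\subset \hom(\pi_1(\Sigma),G)\quot G$ with its image under $\varphi^{*}$ inside $\hom(\mathsf{F}_{m+n},G)\quot G$, which is legitimate because $\varphi^{*}$ is an isomorphism of affine varieties.

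First I would verify the inclusion $\mathsf{X}^{\mathbf{b}}_{rel}\subset \mathsf{X}^{G}_{\mathbf{h},n}$. Pick any class $[\![\rho]\!]\in \mathsf{X}^{\mathbf{b}}_{rel}$, so $\partial([\![\rho]\!])=\mathbf{b}$. The commutativity $\partial_{par}\circ \varphi^{*}=\pi\circ \partial$ gives
\[
\partial_{par}(\varphi^{*}([\![\rho]\!]))\,=\,\pi(\mathbf{b})\,=\,q(\mathbf{h}),
\]
where the last equality uses the hypothesis $\mathbf{b}\in \pi^{-1}(q(\mathbf{h}))$. Hence $\varphi^{*}([\![\rho]\!])\in \partial_{par}^{-1}(q(\mathbf{h}))=\mathsf{X}^{G}_{\mathbf{h},n}$, establishing containment as (affine) subvarieties.

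For the reverse inclusion in the union, take $[\![\sigma]\!]\in \mathsf{X}^{G}_{\mathbf{h},n}$ and set $[\![\rho]\!]:=(\varphi^{*})^{-1}([\![\sigma]\!])$. Define $\mathbf{b}:=\partial([\![\rho]\!])\in (G\quot G)^{b}$. Applying the commutative diagram again,
\[
\pi(\mathbf{b})\,=\,\pi(\partial([\![\rho]\!]))\,=\,\partial_{par}([\![\sigma]\!])\,=\,q(\mathbf{h}),
\]
so $\mathbf{b}\in \pi^{-1}(q(\mathbf{h}))$, and by definition $[\![\rho]\!]\in \mathsf{X}^{\mathbf{b}}_{rel}$. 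Thus $[\![\sigma]\!]=\varphi^{*}([\![\rho]\!])$ lies in $\mathsf{X}^{\mathbf{b}}_{rel}$ under the identification. Combined with the first inclusion, this yields
\[
\mathsf{X}^{G}_{\mathbf{h},n}\,=\,\bigcup_{\mathbf{b}\in \pi^{-1}(q(\mathbf{h}))}\mathsf{X}^{\mathbf{b}}_{rel}.
\]

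There is no real obstacle; the argument is a direct diagram chase. The only point worth being a little careful about is the implicit identification of $\hom(\pi_1(\Sigma),G)\quot G$ with $\hom(\mathsf{F}_{m+n},G)\quot G$ via $\varphi^{*}$, which is what allows one to view $\mathsf{X}^{\mathbf{b}}_{rel}$ as an affine subvariety of $\mathsf{X}^{G}_{\mathbf{h},n}$; this is justified by the fact that $\varphi$ is an isomorphism of free groups, hence $\varphi^{*}$ is an isomorphism of affine GIT quotients.
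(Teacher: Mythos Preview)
Your proof is correct and follows essentially the same approach as the paper: both arguments use the commutative diagram of Proposition~\ref{prop-diagram} together with the identification via the isomorphism $\varphi^{*}$ to rewrite $\mathsf{X}_{\mathbf{h},n}^{G}=\partial_{par}^{-1}(q(\mathbf{h}))=\partial^{-1}(\pi^{-1}(q(\mathbf{h})))=\bigcup_{\mathbf{b}\in\pi^{-1}(q(\mathbf{h}))}\mathsf{X}_{rel}^{\mathbf{b}}$. The paper compresses this into a single chain of equalities at the level of sets, while you carry out the corresponding element-level diagram chase and make the role of $\varphi^{*}$ explicit; the content is the same.
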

\begin{proof}
From the definitions, and from commutativity of the diagram one can
write
\[
\mathsf{X}_{\mathbf{h},n}^{G}=\partial_{par}^{-1}(\mathbf{h})\,=\, 
\partial^{-1}(\pi^{-1}(q(\mathbf{h})))\,=\, 
\bigcup_{\mathbf{b}\in\pi^{-1}(q(\mathbf{h}))}\partial^{-1}(\mathbf{b})\,=\, 
\bigcup_{\mathbf{b}\in\pi^{-1}(q(\mathbf{h}))}\mathsf{X}_{rel}^{\mathbf{b}},\]
completing the proof.
\end{proof}

\subsection{Examples}

In the following examples we consider $g=0$. Thus $m+n=b-1$, and $\Sigma$ is a sphere with $b=m+n+1$ punctures.
In particular, our restriction $b>m$ is automatically satisfied.
Now, the identification of $\mathsf{F}_{m+n}$ with $\pi_{1}(\Sigma)$ under 
equation (\ref{iso-free-gps}) becomes simply $\varphi(e_{i}):=\gamma_{i},
i=1,\cdots ,m+n$, and such that the last loop verifies 
$\gamma_b=\left(\gamma_1\cdots \gamma_{b-1}\right)^{-1}$. We will also restrict to the case $n=0$, so that $m=b-1$ becomes the rank of these free groups.

Under the isomorphisms
$\hom(\mathsf{F}_{m},G)\quot G \cong G^{m}\quot G \cong \hom(\pi_{1}(\Sigma),G)\quot G$, we can then write the diagram of Proposition
\ref{prop-diagram} in the following way:

$$
\xymatrix{
G^{m}\quot G\ar[r]^{\partial}\ar[rd]_{\partial_{par}} & (G\quot G)^{m+1}\ar[d]^{\pi}\\
& (G\quot G)^{m}
}
$$
where $\pi$ is the projection onto the first $m$ factors, and according to \eqref{boundary} and \eqref{delta-par}, we can write
for a given $(g_1,\cdots,g_{m})\in G^{m}$
$$\partial_{par}([\![g_1,\cdots,g_{m}]\!])=([\![g_1]\!], \cdots, [\![g_m]\!]),$$ and 
$$\partial([\![g_1,\cdots,g_{m}]\!])= ([\![g_1]\!], \cdots, [\![g_m]\!], [\![\left(g_1\cdots g_{m}\right)^{-1})]\!].$$

We concentrate on the lower rank simple groups $G=\mathrm{SL}(\ell,\C)$, as these allow for explicit
descriptions. For $G=\mathrm{SL}(\ell,\C)$, a generating set for the the coordinate ring 
$\C[G\quot G]=\C[G]^G$ is given by the 
coefficients of the characteristic polynomial of a generic $X\in 
\mathrm{SL}(\ell,\mathbb{C})$. In particular, we have $G\quot G \cong \mathbb{C}^{\ell-1}$.

As the $m=1$ cases were described in
Section \ref{sect4.1}, we consider now $m>1$.

\begin{ex}
{\rm
\label{ex2} Let $G=\mathrm{SL}(2,\mathbb{C})$. Then $G\quot G$
can be naturally identified with $\mathbb{C}$ by $[\![g]\!]\longmapsto\tr (g)$, 
for $[\![g]\!]\in G\quot G$.
Now let $m=2$. Then 
\[
\hom(\mathsf{F}_{2},G)\quot G=\mathrm{SL}(2,\C)^{2}\quot\mathrm{SL}(2,\C)\cong\C^{3}\]
by the Fricke-Klein-Vogt Theorem \cite{FK,V} and the boundary map
is an isomorphism from $G^{2}\quot G$ to $\C^{3}$ (see \cite[Theorem 2.1.1]{BeGo}),
which is explicitly given by 
\[
\partial([\![(g_{1},g_{2})]\!])=(\tr(g_{1}),\tr(g_{2}),\tr(g_{1}g_{2})),\]
since $\tr(g_1g_2)^{-1}=\tr(g_1g_2)$ in $\mathrm{SL}(2,\C)$.
Then the diagram for this example is 
\[
\xymatrix{G^{2}\quot G\cong\mathbb{C}^{3}\ar[r]^{\partial}\ar[rd]_{\partial_{par}} & (G\quot G)^{3}\cong\mathbb{C}^{3}\ar[d]^{\pi}\\
 & (G\quot G)^{2}\cong\mathbb{C}^{2},}
\]
where $\partial_{par}([\![g_{1},g_{2}]\!])=(\tr(g_{1}),\tr(g_{2}))$.
Thus, taking $K\,=\,\mathrm{SU}(2)$ and $\mathbf{h}\,=\,(h_{1},h_{2})\,\in\, 
\mathrm{SU}(2)^{2}$
we have, \[
\mathsf{X}_{\mathbf{h},0}^{G}=\partial_{par}^{-1}(\tr(h_{1}),\tr(h_{2}))\,=\,\partial^{-1}\left(\pi^{-1}(\tr(h_{1}),\tr(h_{2}))\right)=\mathbb{\partial}^{-1}(\mathbb{C})\cong\mathbb{C}\,,\]
where we used the isomorphism $\partial:G^{2}\quot 
G\stackrel{\sim}{\longrightarrow}\mathbb{C}^{3}$
and the obvious fact that $\pi^{-1}(z,w)\cong\mathbb{C}$ for any
$z,w\in\mathbb{C}$. }
\end{ex}

\begin{ex}
{\rm
Let again $G=\mathrm{SL}(2,\mathbb{C})$, but set $m=3$. In this
case, $\hom(F_{3},G)\quot G$ can be identified with a cubic hyper-surface
in $\mathbb{C}^{7}$ (a branched double cover over $\C^{6}$) in the
following way (see \cite[Section 3.B]{BeGo} for details).

First identify $\rho\in\hom(\mathsf{F}_{3},G)$ with $(\rho(e_{1}),\rho(e_{2}),\rho(e_{3}))=(g_{1},g_{2},g_{3})\in G^{3}$
and define $a=\tr(g_{1})$, $b=\tr(g_{2})$, $c=\tr(g_{3})$, $d=\tr(g_{1}g_{2}g_{3})$,
$x=\tr(g_{1}g_{2})$, $y=\tr(g_{2}g_{3})$ and $z=\tr(g_{3}g_{1})$.
With these seven traces as coordinates of $\mathbb{C}^{7}$, the 
quotient $G^{3}\quot G=\hom(F_{3},G)\quot G$
becomes the subvariety of ${\mathbb C}^7$, where the following cubic polynomial 
vanishes\[
x^{2}+y^{2}+z^{2}+xyz-(ab+cd)x-(ad+bc)y-(ac+bd)z-4+a^{2}+b^{2}+c^{2}+d^{2}-abcd.\]
 Denote this polynomial by $p(a,b,c,d,x,y,z)$. Again, identifying
$G\quot G$ with $\mathbb{C}$ using the trace, we have the commutative diagram
\[
\xymatrix{\mathbb{C}^{7}\supset G^{3}\quot G\ar[r]^{\partial}\ar[rd]_{\partial_{par}} & (G\quot G)\cong\mathbb{C}^{4}\ar[d]^{\pi}\\
 & (G\quot G)^{3}\cong\mathbb{C}^{3}}
\]
As in the previous example, $\partial_{par}(g_{1},g_{2},g_{3})=(a,b,c)$,
but now $\partial(g_{1},g_{2},g_{3})=(a,b,c,d)$ since in $\mathsf{F}_{3}$
we have $e_{0}=(e_{1}e_{2}e_{3})^{-1}$, and $\tr(g_{1}g_{2}g_{3})^{-1}=\tr(g_{1}g_{2}g_{3})$
in $\mathrm{SL}(2,\C)$. The relative character variety results from
fixing $\mathbf{b}\,=\,(a,b,c,d)\in\mathbb{C}^{4}$ (corresponding to the four
boundaries),
so\[
\mathsf{X}_{rel}^{\mathbf{b}}=\{(x,y,z)\in\mathbb{C}^{3}:\ p(a,b,c,d,x,y,z)=0\},\]
which is manifestly a cubic surface in $\mathbb{C}^{3}$. Similarly,
the parabolic character variety results from fixing conjugacy classes
in only the first 3 boundaries, so, with 
$\mathbf{h}\,=\,(h_{1},h_{2},h_{3})\,\in\, 
K^{3}$, where $K=\mathrm{SU}(2)$ and $a=\tr (h_{1})$, $b=\tr (h_{2})$ and 
$c=\tr (h_{3})$,
we obtain \[
\mathsf{X}_{\mathbf{h},0}^{G}=\{(x,y,z,d)\in\mathbb{C}^{4}:\ p(a,b,c,d,x,y,z)=0\}\]
which is a cubic 3-fold in $\mathbb{C}^{4}$, defined over $\mathbb{R}$
(since $a,b,c\in [-2,2]$ are traces of $\mathrm{SU}(2)$ matrices).}
\end{ex}

\begin{ex}
{\rm In this example, let $G=\mathrm{SL}(3,\mathbb{C})$, and $m=2$. As shown 
in \cite{La1}, $\mathrm{SL}(3,\mathbb{C})^{2}\quot \mathrm{SL}(3,\mathbb{C})$ 
is isomorphic to an degree $6$ hyper-surface in $\mathbb{C}^{9}$ which maps 
onto $\mathbb{C}^{8}$ generically two-to-one. The relative character varieties 
in this case (for both the three-holed sphere and the one-holed torus) were studied in \cite{La4}. The diagram in this case is this:
$$
\xymatrix{
\C^9\supset\mathrm{SL}(3,\mathbb{C})^{2}\quot\mathrm{SL}(3,\mathbb{C})\ar[r]^{\partial}\ar[rd]_{\partial_{par}} &(\mathrm{SL}(3,\mathbb{C})\quot \mathrm{SL}(3,\mathbb{C}))^{3}\cong \mathbb{C}^{6}\ar[d]^{\pi}\\
&(\mathrm{SL}(3,\mathbb{C})\quot\mathrm{SL}(3,\mathbb{C}))^{2}\cong \mathbb{C}^{4}
}
$$

The isomorphism $\mathrm{SL}(3,\C)\quot\mathrm{SL}(3,\C)\cong \C^2$ is given by 
$[\![g_1]\!]\longmapsto(\tr g_1,\tr g_1^{-1})$, and the map that embeds 
$\mathrm{SL}(3,\C)^2\quot\mathrm{SL}(3,\C)$ in $\C^9$ is given by 
$$[\![(g_1,g_2)]\!]\longmapsto (t_1,t_2,t_3,t_4,t_5,t_6,t_7,t_8,t_9)\, ,$$ 
where 
$t_1 
= 
\tr( g_1)$, $t_2= \tr( g_1^{-1})$, $t_3= \tr( g_2)$, $t_4= \tr( g_2^{-1})$, $t_5= \tr( g_1 g_2)$, $t_6 =\tr \left((g_1 g_2)^{-1}\right)$, $t_7= \tr( g_1^{-1}g_2)$, $t_8= \tr (g_1g_2^{-1})$, $t_9 = \tr (g_1g_2g_1^{-1}g_2^{-1})$. 

As with Example \ref{ex2}, $e_3$ is homotopic to $(e_1 e_2)^{-1}$, but unlike 
in Example \ref{ex2} the trace is not invariant under inversion, although this 
does not pose a problem since in this case we need both the boundary loop and its inverse for the boundary map.

Fix the six invariants $t_1,t_2,t_3,t_4,t_5,t_6$ corresponding to the three 
boundaries; call those fixed values 
$a_1,a_2,a_3,a_4,a_5,a_6$ respectively.

We have $$\partial^{-1}(a_1,\cdots , a_6)=\{(a_1,\cdots ,a_6, t_7,t_8,t_9)\in 
\mathbb{C}^{9} \ | \ p(a_1,\cdots ,a_6,t_7 ,t_8, t_9)=0\}$$ for $p$ a 
polynomial of degree 
$6$ in $\mathbb{C}^{9}$. Note that after fixing the 6 coordinates $p$ has 
degree 3 in the other 3 variables. Thus, $\partial^{-1}(a_1,\cdots , a_6)$ is 
isomorphic to a cubic hyper-surface in $\C^3$; as shown in \cite{La2} its smooth points form a complex symplectic surface. It likewise follows that
$$\partial_{par}^{-1}(a_1,a_2,a_3,a_4)=\{(a_1,\cdots ,a_4,t_5,t_6, 
t_7,t_8,t_9)\in 
\mathbb{C}^{9} \ | \ p(a_1,\cdots , a_4,t_5,\cdots ,t_9)=0\},$$ is isomorphic 
to a 
quartic hyper-surface in $\C^5$ since the degree of $p$ in the last 5 variables is 4.}
\end{ex}

\section*{Acknowledgments.}The first, third, and fourth authors thank Instituto Superior T\'ecnico for hospitality; their visit to IST was funded by the FCT project PTDC/MAT/099275/2008. The second author is partially supported by FCT project PTDC/MAT/120411/2010.  The third author additionally wishes to thank the GEAR network for funding his visit to the Institut Henri Poincar\'e.  The fourth author is grateful to Michel Brion for fruitful discussions. Finally, we also thank Oscar Garc\'{\i}a-Prada for communicating to us the work in \cite{BiGPM}.  Lastly, we thank the referee for useful suggestions which led us to a clarification of some of proofs.


\end{document}